\renewcommand {\a}{ \alpha }
\newcommand{\y}{\eta}
\newcommand{\e}{\epsilon}
\newcommand{\D}{\Delta}
\newcommand{\s}{\sigma}
\renewcommand{\l}{\lambda}
\newcommand{\z}{\zeta}
\newcommand{\p}{\partial}
\newcommand{\Om}{\Omega}
\newcommand{\R}{ \mathbb R}
\newcommand{\N}{ \mathbb N}
\newcommand{\CL}{\mathcal L}
\newcommand{\CC}{\mathcal C}
\newcommand{\CH}{\mathcal H}
\newcommand{\CR}{\mathcal R}
\newcommand {\GA}{\mathfrak A}
\newcommand {\GF}{\mathfrak F}
\newcommand {\GH}{\mathfrak H}
\newcommand {\GZ}{\mathfrak Z}
\newcommand {\ga}{\mathfrak a}
\newcommand {\gd}{\mathfrak d}
\newcommand {\ba}{\mathbf a}
\newcommand {\bd}{\mathbf d}
\newcommand {\bq}{\mathbf q}
\newcommand {\BH}{\mathbf H}
\newcommand {\BQ}{\mathbf Q}
\newcommand {\BP}{\mathbf P}
\newcommand {\BS}{\mathbf S}
\newcommand {\BT}{\mathbf T}
\newcommand {\BZ}{\mathbf Z}
\newcommand{\CA}{\mathcal A}
\newcommand{\wt}{\widetilde}
\newcommand{\wh}{\widehat}
\DeclareMathOperator{\re}{Re} \DeclareMathOperator{\dist}{dist}
\DeclareMathOperator{\res}{\restriction}
\newtheorem{thm}{Theorem}[section]
\newtheorem{prop}[thm]{Proposition}
\theoremstyle{definition}
\theoremstyle{remark}
\numberwithin{equation}{section}
\newcommand{\bsymb}{\boldsymbol}
\begin{document}
\title[Laplacian in a narrow strip, II]
{On the spectrum of the Dirichlet Laplacian in a narrow strip, II}
\author[L. Friedlander]{Leonid Friedlander}
\address{University of Arizona\\ Tucson, Arizona\\ USA}
\email{friedlan@math.arizona.edu}

\author[M. Solomyak]{Michael Solomyak}
\address{Department of Mathematics\\The Weizmann Institute of Science\\
Rehovot 76100\\Israel} \email{solom@wisdom.weizmann.ac.il}
\thanks{}
\date{}
\subjclass {35P15}
\begin{abstract}
This is a continuation of the paper \cite{FS}.
We consider the Dirichlet Laplacian in a family of unbounded
domains $\{x\in\R,\ 0<y<\e h(x)\}$. The main assumption is that
$x=0$ is the only point of global maximum of the positive,
continuous function $h(x)$. We show that the number of eigenvalues lying
below the essential spectrum indefinitely grows as $\e\to 0$, and
find the two-term asymptotics in
$\e\to 0$ of each eigenvalue and the one-term asymptotics of the
corresponding eigenfunction. The asymptotic formulae obtained
involve the eigenvalues and eigenfunctions of an auxiliary ODE on
$\R$ that depends only on the behavior of $h(x)$ as $x\to 0$.

The proof is based on a detailed study of the resolvent of the operator
$\D_\e$.
\end{abstract}

\maketitle

\section{Introduction}\label{intro}
This paper is a continuation of the authors' work \cite{FS} where
we studied
the spectrum of the Dirichlet Laplacian $\D_{\e,D}$ in a narrow
strip
\begin{equation*}
\Om_\e=\{(x,y): x\in I,\ 0< y<\e h(x)\}.
\end{equation*}
The main objective in \cite{FS} was to understand the behavior of
eigenvalues as $\e\to 0$, and the main
assumption was that $I$ is a finite
segment and the continuous function $h(x)$ has on $I$ a single point of
global maximum.  We found the two-term asymptotics (in $\e\to 0$) of
each eigenvalue, and also the one-term asymptotics of each eigenfunction. Our
approach was based upon a careful study of the resolvent
\[\GA_{\e,D}^{-1}:=(\D_{\e,D}-K\e^{-2})^{-1}\]
with an appropriate choice
of the constant $K$; see eq. \eqref{0:3x} below.  Note that
we consider the Laplacian as a positive operator, so that
\[ \D\psi=-\psi''_{xx}-\psi''_{yy}.\]

Here we apply the same approach to two other problems of a similar nature.
One of them concerns the case $I=\R$, and the assumptions about $h(x)$ are
basically the same as in \cite{FS}, complemented by a mild additional
condition as $|x|\to\infty$. In another problem $I$ is a finite segment
(as it was in \cite{FS}), but the Dirichlet condition at the vertical
parts of $\p\Om_\e$ is replaced by the Neumann condition. This gives rise to
the operator $\D_{\e,DN}$ that was not discussed in \cite{FS}. In both cases,
and especially in the second one, the difference with the original problem
studied in \cite{FS} looks minor. However, some technical tools used there
no more apply in the new situation, and one has to look for appropriate
substitutes.

Note that the case of the Neumann boundary condition on the whole of
$\p\Om_\e$ is simpler than the case of the Dirichlet condition, see its
analysis in \cite {RS} and \cite{KZ}. The results in \cite {RS}, \cite{KZ}
concern a much wider class of domains than those in our paper \cite{FS}.

\vskip0.2cm

We believe that our results for unbounded $\Om_\e$, i.e. for $I=\R$, are of
some independent interest. They easily extend to the case when $I$ is a
half-line; we leave it to the reader. The results for the operator $\D_{\e,N}$
are of a more technical character. They are useful, since they allow one to
apply the Dirichlet -- Neumann bracketing for study of some other
problems. In particular,
in our problem for $I=\R$ this gives a simple way to obtain the asymptotics
of eigenvalues, avoiding an analysis of the resolvent; we show this is
section \ref{infeig}. One more problem concerns the case when $I=\R$ and the
function $h(x)$ is periodic (Laplacian in a thin
periodic waveguide). Here the spectrum of $\D_{\e,D}$ has the band -- gap
structure, and
we study the location of bands and give a bound for their widths;
it turns out that they decay exponentially as $\e\to 0$. We present our
corresponding results in a separate paper \cite{FS2}.
\vskip0.2cm

We complement these comments at the end of section \ref{set}, after
introducing necessary notations and formulating some of our main results.

\vskip0.2cm {\bf Acknowledgments}. This work was mostly motivated by the
questions
asked by participants of the workshop in Quantum Graphs, their Spectra and
Application (Cambridge, April 2007) after the talk given
by the second author.
We are grateful to all who asked questions. Especially, we would like to
thank Brian Davies and Leonid Parnovski for their suggestion to use
Dirichlet--Neumann bracketing in the case of the whole line.
The work was mostly done when both
authors visited the Isaac Newton Institute for Mathematical
Sciences in Cambridge, UK. We acknowledge the hospitality of the
Newton Institute. The first author was partially supported
by the NSF grant DMS 0648786.

\section{The case of finite segment: setting of the problem and formulation
of main
results}\label{set}
\subsection{Preliminaries.}\label{prel}
 Let $I=[-a,b]$ be a finite segment and $h(x)>0$
be a continuous function on $I$. We assume that

({\it i}) $x=0$ is the only point of global maximum of $h(x)$ on
$I$;

({\it ii}) The function $h(x)$ is $C^1$ on $I\setminus\{0\}$, and
in a neighborhood of $x=0$ it admits an expansion

\begin{equation}\label{1:1}
h(x)=\begin{cases} M-c_+x^m+O\bigl(x^{m+1}\bigr),\qquad & x>0,\\
M-c_-|x|^m+O\bigl(|x|^{m+1}\bigr),\qquad & x<0\end{cases}
\end{equation}
where $M,c_\pm>0$ and $m\ge 1$.

 \vskip0.2cm

We consider the Laplacian in $\Om_\e$, and we
always impose the boundary conditions
\begin{equation}\label{0:2}
\psi(x,0)=\psi(x,\e h(x))= 0.
\end{equation}
The conditions at $x=-a$ and $x=b$ can be either Dirichlet
 or Neumann, and we denote the corresponding operators $\D_{\e,D}$ and
$\D_{\e,DN}$, respectively; in \cite{FS} the operator $\D_{\e,D}$
was denoted as $\D_{\e}$. For the sake of brevity, sometimes we
speak about the `$D$-problem' and the `$DN$-problem'.

\vskip0.2cm Denote
\begin{equation*}
H^{1,d}(\Om_\e)=\{\psi\in H^1(\Om_\e):\ \psi(x,0)=\psi(x,\e h(x))=0\}.
\end{equation*}
The conditions \eqref{0:2} imply
\begin{equation}\label{0:2p}
 \int_0^{\e h(x)} \psi'_y(x,y)^2dy\ge \frac{\pi^2}{\e^2 h^2(x)}
\int_0^{\e h(x)} \psi^2(x,y)dy;
\end{equation}
 here $\psi$ is a smooth real-valued function. The nature of the problem
allows us to work with such functions only. By \eqref{0:2p},
\begin{gather}
\int_{\Om_\e}|\nabla\psi|^2dxdy\ge\frac{\pi^2}{M^2\e^2}\int_{\Om_\e}
\psi^2dxdy,\qquad \forall\psi\in H^{1,d}(\Om_\e).\label{0:2a}
\end{gather}
It is convenient for us to work with the quadratic
form
\begin{equation}\label{0:3}
\ga_{\e}[\psi]=\int_{\Om_\e}\left(|\nabla\psi|^2-
\frac{\pi^2}{M^2\e^2}\psi^2\right)dxdy,\qquad \psi\in
H^{1,d}(\Om_\e).
\end{equation}
We denote the corresponding operators $\GA_{\e,D}$ and
$\GA_{\e,DN}$, depending on the boundary condition at $x=-a$ and
$x=b$. We supress  subscripts $D$ and $DN$ when our argument applies
to both operators.
On their respective domains they act as
\begin{equation}\label{0:3x}
\GA_\e\psi=\D\psi-\frac{\pi^2}{M^2\e^2}\psi.
\end{equation}

\subsection{Limiting behavior of eigenvalues and
eigenfunctions}\label{limb}
It turns out that under the conditions ({\it i}), ({\it ii})
 this behavior
is determined by the operator on $L^2(\R)$ given by
\begin{equation}\label{1:s2}
    \BH=-\frac{d^2}{dx^2}+q(x) ,\qquad q(x)=
\begin{cases}
2\pi^2 M^{-3} c_+ x^m,\ x>0,\\
2\pi^2 M^{-3} c_- |x|^m,\ x<0.\end{cases}
    \end{equation}
The spectrum of $\BH$ is discrete and consists of simple
eigenvalues which we denote by $\mu_j$. The corresponding
eigenfunctions $X_j(x)$, normalized by the conditions
$\|X_j\|_{L^2(\R)}=1,\ X_j(x)>0$ for large $x>0$, decay as
$|x|\to\infty$ superexponentially fast.
If $m=2$ and $c_+=c_-=c$,
then $\BH$ turns into the harmonic oscillator.

 \vskip0.2cm

\begin{thm}\label{1:t0}
Let $I$ be a finite segment and $h(x)$ meet the conditions
({\it i}) and ({\it ii}). Then

1) the eigenvalues $\l_j(\e,D)$, $\l_j(\e,DN)$ of the operators
$\D_{\e,D}$, $\D_{\e,DN}$ have the same asymptotic behavior, namely
\begin{equation}\label{1:s1}
    \lim_{\e\to 0}\e^{2\a}\biggl(\l_j(\e)-\frac{\pi^2}
{M^2\e^2}\biggr)=\mu_j,
\end{equation}
where
\begin{equation}\label{1:s20}
\a=2(m+2)^{-1}.
\end{equation}

2) For the normalized eigenfunctions $\Psi_j(\e,D;x,y)$,
$\Psi_j(\e,DN;x,y)$ of the operators
$\D_{\e,D}$, $\D_{\e,DN}$ we have, with an appropriate choice of sign:
\begin{equation*}
    \lim_{\e\to 0}\int_{\Om_\e}\left(\Psi_j(\e;x,y)-
\frac{\sqrt2}{\sqrt{\e^{1+\a}h(x)}}X_j(x\e^{-\a})\right)^2dxdy= 0.
\end{equation*}
\end{thm}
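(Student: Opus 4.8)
The plan is to reduce the two–dimensional problem to the one–dimensional operator $\BH$ of \eqref{1:s2} by separating the transverse variable $y$, and then to transfer the spectral information back to $\GA_\e$ by the variational principle; the two–term precision in \eqref{1:s1} is what forces one to track the errors carefully.

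\textbf{Step 1: separation of the transverse variable.} For each $x$, let $\phi_{k,\e}(x,\cdot)$, $k\ge1$, be the $L^2(0,\e h(x))$–normalized Dirichlet eigenfunctions of $-d^2/dy^2$, so that $\phi_{1,\e}(x,y)=\sqrt{2/(\e h(x))}\,\sin(\pi y/(\e h(x)))$ with eigenvalue $\pi^2/(\e^2 h(x)^2)$. Writing $\psi=\sum_k u_k(x)\phi_{k,\e}(x,y)$, the transverse part of $\ga_\e$ and the potential $-\pi^2/(M^2\e^2)$ are diagonal in $k$, while $d/dx$ couples the modes through the coefficients $\langle\phi_{l,\e},\partial_x\phi_{k,\e}\rangle_{L^2(dy)}=O(h'(x)/h(x))$, which are bounded on $I$ since $m\ge1$. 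On the first mode one gets the quadratic form of $B_\e=-d^2/dx^2+\e^{-2}V(x)+W_\e(x)$ on $L^2(I)$, namely
\begin{equation*}
\ga_\e[u\,\phi_{1,\e}]=\int_I\Bigl((u')^2+\e^{-2}V(x)u^2+W_\e(x)u^2\Bigr)dx,\qquad V(x)=\pi^2\bigl(h(x)^{-2}-M^{-2}\bigr),
\end{equation*}
with $W_\e(x)=\int_0^{\e h(x)}(\partial_x\phi_{1,\e})^2\,dy=O\bigl((h'(x)/h(x))^2\bigr)\ge0$ and with the endpoint conditions at $x=-a,b$ inherited from the problem (Neumann for $\D_{\e,DN}$, Dirichlet for $\D_{\e,D}$). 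By ({\it i}) $V>0$ off $x=0$, hence $V$ is bounded below by a positive constant outside any neighborhood of $0$; by \eqref{1:1}, $V(x)=2\pi^2M^{-3}c_\pm|x|^m+O(|x|^{m+1})$ and $W_\e(x)=O(|x|^{2(m-1)})$ near $0$.

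\textbf{Step 2: the one–dimensional model.} Rescaling $x=\e^\a\xi$ with $\a$ as in \eqref{1:s20} turns $\e^{-2}V$ into $\e^{-2\a}\bigl(q(\xi)+O(\e^\a)\bigr)$ on bounded $\xi$–intervals — this is exactly how $\a$ is forced, since $\a m-2=-2\a$ — while $\e^{-2}W_\e$ becomes $O(\e^{2\a(m-1)})$, lower order relative to $\e^{-2\a}$; here $q$ is precisely the potential of \eqref{1:s2}. Since $\e^{-2}V\gg\e^{-2\a}$ away from the scale $|x|\sim\e^\a$, an Agmon–type estimate shows that any eigenfunction of $B_\e$ with eigenvalue $O(\e^{-2\a})$ concentrates superexponentially in $\xi$ near $x=0$; in particular it does not feel the endpoints of $I$, nor the $O(|x|^{m+1})$ term in \eqref{1:1} (which after rescaling contributes only $\e^{-2}|x|^{m+1}\sim\e^{-\a}=o(\e^{-2\a})$ where the eigenfunction lives). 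Hence $\e^{2\a}\nu_j(B_\e)\to\mu_j$, and the rescaled, renormalized $j$–th eigenfunction of $B_\e$ converges in $L^2$ to $X_j$. Note that $\BH\ge0$ and $q\not\equiv0$, so $\mu_j>0$ and $B_\e\ge0$.

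\textbf{Step 3: back to $\GA_\e$.} For the upper bound, test $\ga_\e$ on the $j$–dimensional space spanned by $v_i(x)\phi_{1,\e}(x,y)$, $i=1,\dots,j$, with $v_i(x)=\e^{-\a/2}\chi(x)X_i(x\e^{-\a})$ and $\chi$ a fixed cutoff enforcing the endpoint conditions; the superexponential decay of $X_i$ makes the cutoff error exponentially small, and Step 2 gives $\ga_\e\le(\e^{-2\a}\mu_j+o(\e^{-2\a}))\|\cdot\|^2$ on this space, whence $\l_j(\e)-\pi^2M^{-2}\e^{-2}\le\e^{-2\a}\mu_j+o(\e^{-2\a})$ for both $D$ and $DN$. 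For the lower bound, let $P_\e$ be the orthogonal projection onto $\{u(x)\phi_{1,\e}(x,y)\}$ and $Q_\e=I-P_\e$. The second transverse eigenvalue $4\pi^2/(\e^2 h^2)\ge4\pi^2/(M^2\e^2)$ yields $\ga_\e[Q_\e\psi]\ge3\pi^2M^{-2}\e^{-2}\|Q_\e\psi\|^2$, which dominates $\e^{-2\a}\|Q_\e\psi\|^2$ because $2\a<2$; the cross term involves only $d/dx$ (the transverse and potential parts being orthogonal across modes) and is bounded, via Cauchy–Schwarz, the coupling bound $O(h'/h)$, and $\|u'_1\|^2\le\ga_\e[P_\e\psi]$, by $t\,\ga_\e[P_\e\psi]+\tfrac12\ga_\e[Q_\e\psi]$ for small $\e$, $t$ a small parameter. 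Using the max–min characterization of $\l_j$ with the upper–bound test space as the excluded subspace, the $P_\e$–component of any competitor is $L^2(I)$–orthogonal to the first $j-1$ eigenfunctions of $B_\e$ up to $o(1)$, so $\ga_\e[P_\e\psi]\ge(\e^{-2\a}\mu_j-o(\e^{-2\a}))\|P_\e\psi\|^2$; combined with the $Q_\e$ bound this gives $\l_j(\e)-\pi^2M^{-2}\e^{-2}\ge\e^{-2\a}\mu_j-o(\e^{-2\a})$, which together with the upper bound is \eqref{1:s1}. Finally, since the $\mu_j$ are simple the numbers $\e^{-2\a}\mu_j$ are separated by $\sim\e^{-2\a}$ and the rest of the spectrum of $\GA_\e$ sits at $\sim\e^{-2}$; so for small $\e$ the eigenvalue $\l_j(\e)$ is simple and spectrally isolated at scale $\e^{-2\a}$, and a routine resolvent/perturbation estimate shows that the corresponding eigenfunction $\Psi_j$ is $L^2(\Om_\e)$–close to $v_j\phi_{1,\e}$, i.e.\ to $\frac{\sqrt2}{\sqrt{\e^{1+\a}h(x)}}X_j(x\e^{-\a})\sin\frac{\pi y}{\e h(x)}$ up to normalization; this is part 2).

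\textbf{Expected main obstacle.} The crux, and the reason the introduction advertises a detailed resolvent analysis, is the a priori localization at scale $\e^\a$: without uniform Agmon–type control of eigenfunctions for the $\e$–dependent well $\e^{-2}V$ one cannot argue that the endpoints, the global (non–model) shape of $h$, and the $O(|x|^{m+1})$ remainder in \eqref{1:1} are all negligible to the order required by the \emph{two–term} asymptotics. Running this in tandem with the control of the transverse mode coupling against the $\e^{-2}$ spectral gap, uniformly enough to also yield the eigenfunction asymptotics, is where the real work lies.
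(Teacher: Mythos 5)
Your proposal is essentially correct in outline, but it takes a genuinely different route from the paper. The paper does not prove Theorem \ref{1:t0} by variational bracketing at all: it establishes \emph{norm resolvent} convergence in two steps --- Theorem \ref{1:t2}, $\|\GA_\e^{-1}-\BQ_\e^{-1}\oplus\bsymb0_{\CL^\e}\|=O(\e^{3\a})$, proved via the abstract two-subspace scheme of Proposition \ref{10:lem} (your $P_\e$/$Q_\e$ splitting is exactly its hypothesis set \eqref{10:5}--\eqref{10:7}), and Theorem \ref{1:t1}, $\|(\e^{2\a}\BQ_\e)^{-1}\oplus\bsymb0-(\e^{2\a}\BH_\e)^{-1}\|\to0$ --- and then reads off both the eigenvalue and the eigenfunction asymptotics from operator-norm convergence of inverses. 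The genuinely new difficulty in this paper is the $DN$ case, where the needed lower bound $\BQ_{\e,N}\ge c\,\e^{-2\a}$ (condition \eqref{10:5}) is obtained by comparing $\BQ_{\e,N}^{-1}$ with $\BQ_{\e,D}^{-1}$ via Birman's lemma, giving $\|\BQ_{\e,N}^{-1}-\BQ_{\e,D}^{-1}\|=O(\e^2)$. You instead prove the one-dimensional asymptotics for the Neumann realization directly by Agmon localization (the eigenfunctions of $B_\e$ at energy $O(\e^{-2\a})$ do not see $\p I$), which is a legitimate substitute for the Birman argument; and you transfer the result to $\GA_\e$ by max--min with explicit test spaces rather than by resolvent norms. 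What the paper's route buys is a quantitative, $j$-uniform statement (convergence of the inverses in norm, hence of $\e^{2\a}(\l_j-\pi^2M^{-2}\e^{-2})^{-1}$ uniformly in $j$) and an essentially free proof of part 2); your route is more elementary and self-contained for fixed $j$.

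Two places in your sketch need the most care. First, the cross-term bound: as written, absorbing $\tfrac12\ga_\e[Q_\e\psi]$ for \emph{each} of the two off-diagonal terms would cancel the entire $Q_\e$-contribution; you must keep a definite fraction of $\ga_\e[Q_\e\psi]\ge 3\pi^2M^{-2}\e^{-2}\|Q_\e\psi\|^2$ to dominate $\e^{-2\a}\|Q_\e\psi\|^2$ (the paper's form of the estimate, $|\ga_\e[\psi_\e,\psi^\e]|\le C\e^{\a}(\ba_\e[\psi_\e]\,\ga_\e[\psi^\e])^{1/2}$, makes this bookkeeping painless). Second, part 2) is only asserted; turning "spectral isolation at scale $\e^{-2\a}$ plus a routine perturbation estimate" into a proof still requires the quantitative control of $\|Q_\e\Psi_j\|$ and of the distance from the first-mode component to the $j$-th eigenfunction of $B_\e$ --- exactly the information that the paper's resolvent estimates package once and for all.
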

For the $D$-case this is the result of theorems 1.1 and 1.4 in
\cite{FS}. For the $DN$-case the results are new.

\vskip0.2cm

 Our proof of theorem \ref{1:t0} is based upon the study of
the operator family $\GA_\e^{-1}$ as $\e\to 0$. It consists of two
steps. Firstly, we reduce the original problem to the one for an
auxiliary ordinary differential operator $\BQ_\e$ acting on
$L^2(I)$. Secondly, we show that the operators
$(\e^{2\a}\BQ_\e)^{-1}$ approach a family of operators on
$L^2(\R)$ that are unitarily equivalent to, and hence isospectral
with, the operator $\BH^{-1}$. In the next two subsections we
describe these steps and formulate the corresponding results.
Their proofs are given in section \ref{dn}; the
general scheme is explained in section \ref{scheme}. In section \ref{inf}
we show that under mild additional assumptions about the behavior
of $h(x)$ as $|x|\to\infty$ this scheme applies also to the case
$I=\R$. In the concluding section \ref{infeig} we prove that these
conditions can be simplified even further, if one is interested
only in the behavior of the eigenvalues.

\subsection{Reduction of dimension}\label{red}
 In $L^2(\Om_\e)$ we take the subspace $\CL_\e$ that consists of
functions
\begin{equation*}
\psi(x,y)=\psi_{\e,\chi}(x,y)=\chi(x)\sqrt{\frac2{\e h(x)}}\,
\sin\frac{\pi y}{\e h(x)}.
\end{equation*}
The mapping
\begin{equation}\label{1:2z}
    \bsymb\Pi_\e:\chi\mapsto\psi_{\e,\chi}
\end{equation}
     is an isometric isomorphism of $L^2(I)$ onto
$\CH_\epsilon$, and  we  identify any operator $\BT$ on
$\CH_\e$ with the operator $\bsymb\Pi_\e^{-1}\BT\bsymb\Pi_\e$
acting on $L^2(I)$. Let $\chi\in H^{1}(I)$.
 A direct computation shows that
\begin{equation}\label{1:4a}
    \ba_{\e}[\psi_{\e,\chi}]=\bq_\e[\chi]:=
\int_I\left(\chi'(x)^2+W_\e(x)\chi^2(x)\right)dx,
\end{equation}
where
\begin{equation}\label{1:4g}
W_\e(x)=\frac{\pi^2}{\e^2}\left(\frac1{h^2(x)}-\frac1{M^2}\right)
+\biggl(\frac{\pi^2}3+\frac14\biggr)\frac{h'(x)^2}{h^2(x)}.
\end{equation}
The quadratic form $\bq_{\e}[\chi]$, considered on $H^1(I)$, is
positive definite and closed in $L^2(I)$. The same is true for its
restriction to $H^{1,0}(I)$. The corresponding self-adjoint operator on
$L^2(I)$ acts as
\begin{equation}\label{1:4h}
\BQ_\e\chi=-\chi''+W_\e(x)\chi,
\end{equation}
with the Dirichlet or the Neumann condition at $\p I$. When it is
necessary to reflect it in the notations, we denote these
operators by
$\BQ_{\e,D}$ and $\BQ_{\e,N}$ respectively.

\vskip0.2cm

Below $\CL^\e$ stands for the orthogonal complement of $\CL_\e$ in
$L^2(\Om_\e)$. Given a Hilbert space $\GZ$,
we write $\bsymb0_\GZ$ for the zero operator on $\GZ$.
\begin{thm}\label{1:t2}
Under the assumptions of theorem \ref{1:t0} one has
\begin{equation}\label{1:10}
\left\|\GA_{\e}^{-1}-\BQ_{\e}^{-1}\oplus\bsymb{0_{\CL^\e}}\right\|
=O(\e^{3\a}),\qquad \e\to 0
\end{equation}
where $\GA_\e$ is either of the operators $\GA_{\e,D}$ and
$\GA_{\e,DN}$, and $\BQ_{\e}=\BQ_{\e,D}$ in the $D$-case and
$\BQ_{\e}=\BQ_{\e,N}$ in the $DN$-case.
\end{thm}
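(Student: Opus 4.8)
The plan is to exploit the elementary structure of $\GA_\e^{-1}$ with respect to the splitting $L^2(\Om_\e)=\CL_\e\oplus\CL^\e$. On $\CL_\e$ the form $\ga_\e$ coincides, under the isometry $\bsymb\Pi_\e$, with $\bq_\e$ by the direct computation \eqref{1:4a}, so one expects $\GA_\e^{-1}$ to be close to $\BQ_\e^{-1}\oplus\bsymb0_{\CL^\e}$ provided (a) the off-diagonal coupling between $\CL_\e$ and $\CL^\e$ is small, and (b) the part of $\GA_\e$ living on $\CL^\e$ is large (of order $\e^{-2}$), since on $\CL^\e$ the transverse Dirichlet eigenvalue jumps from $\pi^2/(\e^2h^2)$ to $4\pi^2/(\e^2h^2)$, and after subtracting $\pi^2/(M^2\e^2)$ one still has a positive quantity bounded below by $c\e^{-2}$. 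First I would fix notation for the orthogonal projections $P_\e$ onto $\CL_\e$ and $P^\e=I-P_\e$, write $\GA_\e$ in block form
\begin{equation*}
\GA_\e=\begin{pmatrix} \BA_{11} & \BA_{12}\\ \BA_{21} & \BA_{22}\end{pmatrix},
\end{equation*}
with $\BA_{11}$ identified with $\BQ_\e$ on $L^2(I)$, and then estimate each block, being careful that these are unbounded operators, so the statements have to be made at the level of quadratic forms and their relatively-bounded perturbations.

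The key steps, in order, are as follows. (1) Compute $\BA_{11}=\BQ_\e$ exactly: this is \eqref{1:4a}--\eqref{1:4h} and requires nothing new. (2) Show the lower bound $\BA_{22}\ge c\e^{-2}$ on $\CL^\e$: for $\psi\in\CL^\e$ one has, at each fixed $x$, that $\psi(x,\cdot)$ is $L^2(0,\e h(x))$-orthogonal to $\sin(\pi y/\e h(x))$, so $\int_0^{\e h}(\psi'_y)^2\,dy\ge (4\pi^2/\e^2h^2)\int_0^{\e h}\psi^2\,dy$; subtracting $\pi^2/(M^2\e^2)$ and using $h\le M$ gives $\ga_\e[\psi]\ge (3\pi^2/M^2\e^2)\|\psi\|^2$ for all $\psi\in\CL^\e\cap H^{1,d}$. (3) Estimate the coupling $\BA_{12}$: differentiating $\psi_{\e,\chi}$ in $x$ produces, besides the ``diagonal'' term that was already absorbed into $W_\e$, cross terms of the form $\chi'(x)h'(x)$ and $\chi(x)h''(x)$ times functions of $y$ that are orthogonal to $\sin(\pi y/\e h)$; these feed into $\CL^\e$ but carry no negative power of $\e$, so in form-sense $|\ga_\e[P_\e u,P^\e u]|\le C\|\chi\|_{H^1}\|P^\e u\|$ uniformly in $\e$ (here one uses the $C^1$ hypothesis (ii) on $h$ away from $0$, and near $0$ the expansion \eqref{1:1}, so that $h'$ and the relevant quantities are controlled). (4) Feed (2)--(3) into the Schur-complement / resolvent-identity machinery: write
\begin{equation*}
\GA_\e^{-1}-\BQ_\e^{-1}\oplus\bsymb0_{\CL^\e}
\end{equation*}
in terms of $\BA_{22}^{-1}$ and the Schur complement $\BS=\BA_{11}-\BA_{12}\BA_{22}^{-1}\BA_{21}$, and bound the result. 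Since $\BA_{22}^{-1}=O(\e^2)$ by (2), and $\BA_{12},\BA_{21}=O(1)$ by (3), the correction $\BA_{12}\BA_{22}^{-1}\BA_{21}=O(\e^2)$ as an operator; combined with $\BQ_\e^{-1}=O(\e^{2\a})$ (which follows from $W_\e\ge c\e^{-2\a}$ near the maximum after the $\e^\a$-rescaling — effectively from Theorem~\ref{1:t0}, or more elementarily from the min-max lower bound $\BQ_\e\ge c\e^{-2\a}$), a resolvent expansion yields $\GA_\e^{-1}-\BQ_\e^{-1}\oplus\bsymb0_{\CL^\e}=O(\e^{2\a}\cdot\e^2\cdot\e^{2\a})=O(\e^{2+4\a})$. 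Since $\alpha=2(m+2)^{-1}\le 2/3$, we have $2+4\alpha\ge 3\alpha+ (2+\alpha)\ge 3\alpha$, so this is $O(\e^{3\a})$ as claimed; one should double-check the exact power and, if the crude bound is not enough, refine step (3) by noting the coupling term actually gains an extra factor from the smallness of $h-M$ near $x=0$.

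The main obstacle I expect is step (3) together with the bookkeeping in step (4): making the ``off-diagonal coupling is $O(1)$ in form sense'' statement precise requires writing $\partial_x\psi_{\e,\chi}$ carefully, separating the component proportional to $\sin(\pi y/\e h)$ (already counted) from its complement, and checking that the latter, when paired against an arbitrary $P^\e u\in H^{1,d}$, is controlled by $\|\chi\|_{H^1(I)}\,\|P^\e u\|_{L^2}$ \emph{without} a negative power of $\e$ and without needing control of $\|P^\e u\|_{H^1}$ — this is where the Dirichlet and Neumann cases could conceivably differ (the Neumann condition at $\p I$ changes which boundary terms appear upon integration by parts), and where the hypotheses on $h$ near $0$ and the $C^1$-regularity away from $0$ are genuinely used. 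Once the three block estimates are in hand with uniform constants, the passage to $\eqref{1:10}$ is a standard, if slightly tedious, Schur-complement computation; the sharp exponent $3\a$ (rather than a worse power) should come out provided one keeps track of the extra smallness coming from $h(x)-M=O(|x|^m)$ in the region where $\chi$ is concentrated.
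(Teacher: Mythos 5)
Your overall architecture (split $L^2(\Om_\e)=\CL_\e\oplus\CL^\e$, lower-bound the $\CL^\e$-block by $c\e^{-2}$, show the coupling is small, then run a block/Schur-type resolvent comparison) is the same as the paper's, which formalizes it as an abstract proposition on quadratic forms. But there are two genuine gaps. First, your step (3) asserts that the coupling satisfies $|\ga_\e[P_\e u,P^\e u]|\le C\|\chi\|_{H^1}\|P^\e u\|_{L^2}$ ``without needing control of $\|P^\e u\|_{H^1}$''. This is false: after the orthogonality relations kill the $y$-derivative and the potential terms, the only surviving coupling is $\int_{\Om_\e}(\psi_{\e,\chi})'_x(\psi^\e)'_x\,dxdy$, and while the piece of $(\psi_{\e,\chi})'_x$ proportional to $\sin(\pi y/\e h)$ can be converted (via the identity obtained by differentiating the orthogonality relation in $x$) into a term involving only $\psi^\e$, the piece proportional to $y\cos(\pi y/\e h)$ cannot; one is left with a term of the form $\e^{-3/2}\int \wt h\,\phi\,(\psi^\e)'_x\,y$, which genuinely requires $\|(\psi^\e)'_x\|_{L^2(\Om_\e)}$, i.e. $\ga_\e[\psi^\e]^{1/2}$ (integration by parts in $x$ is unavailable since $h$ is only $C^1$ and, in the $DN$-case, would produce boundary terms). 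The correct estimate is the relative form bound $|\ga_\e[\psi_\e,\psi^\e]|\le C\e^{\a}\bigl(\ba_\e[\psi_\e]\,\ga_\e[\psi^\e]\bigr)^{1/2}$; with it the conclusion still follows, the total error being $O(\e^{2})+O(\e^{\a}\cdot\e^{2\a})=O(\e^{3\a})$ because $\a\le 2/3$. Your exponent bookkeeping, built on ``$\BA_{12}=O(1)$ as an operator,'' does not survive as written, and you also drop the $O(\e^2)$ contribution of the $(2,2)$ block of $\GA_\e^{-1}$ (namely $\BA_{22}^{-1}$ itself versus $\bsymb0_{\CL^\e}$), which happens to be harmless but must be accounted for.

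Second --- and this is the one place where the $DN$-case genuinely differs from \cite{FS} --- you need $\|\BQ_{\e,N}^{-1}\|=O(\e^{2\a})$, and you propose to get it ``effectively from Theorem~\ref{1:t0}'' (circular: that theorem is deduced from the present one) or from ``the min-max lower bound $\BQ_\e\ge c\e^{-2\a}$.'' For the Neumann realization this lower bound is not immediate from $W_\e(x)\ge\s\e^{-2}|x|^m$: after rescaling one faces a Neumann problem on a growing interval, and a trial function concentrated near an endpoint of $I$ sees no Dirichlet condition, so the bound requires a separate argument. The paper obtains it by proving $\|\BQ_{\e,N}^{-1}-\BQ_{\e,D}^{-1}\|=O(\e^2)$ via Birman's lemma on the difference of resolvents of two self-adjoint realizations, combined with the Dirichlet bound from \cite{FS}. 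Without this ingredient your step (4) has no starting point in the $DN$-case, which is precisely the new content of the theorem.
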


For the $D$-case this is theorem 1.2 in \cite{FS}.

\subsection{From $\BQ_\e$ to $\BH$.}\label{red2}
Along with the operator $\BH$ defined in \eqref{1:s2}, let us
consider on $L^2(\R)$ the operator family
\begin{equation*}
 \BH_\e=-\frac{d^2}{dx^2}+\e^{-2}q(x);\quad \e>0.
\end{equation*}
In particular $\BH_1=\BH$. The substitution $x=t\e^\a$ shows that
for any $\e>0$ the operator $\e^{2\a}\BH_\e$ is unitary equivalent
to $\BH$, and hence, $\e^{2\a}\BH_\e$ is an isospectral family of
operators.
\begin{thm}\label{1:t1}
Let $\BQ_\e$ be either of the operators $\BQ_{\e,D}$ and
$\BQ_{\e,N}$. One has
\begin{equation*}
 \lim_{\e\to 0}\|(\e^{2\a}\BQ_\e)^{-1}\oplus\bsymb0_{L^2(\R\setminus I)}
-(\e^{2\a}\BH_\e)^{-1}\|= 0.
\end{equation*}
\end{thm}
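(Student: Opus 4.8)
The plan is to prove convergence of the resolvents by comparing the two operator families at the level of quadratic forms, after localizing near $x=0$. The key observation is that $\e^{2\a}\BQ_\e$ and $\e^{2\a}\BH_\e$, when rescaled via $x=t\e^\a$, become operators whose potentials are $\e^{2\a-2}W_\e(t\e^\a)$ and $q(t)$ respectively. From \eqref{1:4g} and the expansion \eqref{1:1}, one checks that near $x=0$
\begin{equation*}
W_\e(x)=\e^{-2}q(x)+\e^{-2}O(|x|^{m+1})+O(1),
\end{equation*}
so under the substitution and multiplication by $\e^{2\a}$ the scaled potential of $\e^{2\a}\BQ_\e$ is $q(t)+\e^{2\a-2}O(|t\e^\a|^{m+1})+O(\e^{2\a})=q(t)+O(\e^\a)(1+|t|^{m+1})$ on the rescaled image of a fixed neighborhood of $0$, using that $2\a-2+\a(m+1)=\a(m+2)-2=0$ by \eqref{1:s20}. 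Thus on any bounded region in the $t$-variable the potentials converge, and the superexponential decay of the eigenfunctions $X_j$ of $\BH$ controls the complementary region.

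First I would reduce the norm-resolvent statement to a statement about forms plus a compactness input. Since $\BH^{-1}$ is compact (discrete spectrum), and $\e^{2\a}\BH_\e$ is unitarily equivalent to $\BH$, it suffices to show that for every $f\in L^2(\R)$ one has $(\e^{2\a}\BQ_\e)^{-1}\oplus\bsymb0\, f\to(\e^{2\a}\BH_\e)^{-1}f$ strongly, together with a uniform bound and a uniform tightness (no mass escaping to infinity), which upgrades strong to norm convergence in the presence of the compactness. Passing to the rescaled picture, write $\wt\BQ_\e=\e^{2\a}\BQ_\e$ acting on $L^2(\e^{-\a}I)$ with potential $\wt W_\e(t)=\e^{2\a}W_\e(t\e^\a)$, extended by the zero operator on the complement, and $\BH$ with potential $q$. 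I would then compare $\wt\BQ_\e^{-1}g$ and $\BH^{-1}g$ for $g$ in a dense set, say compactly supported $g$.

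The comparison itself: let $u_\e=\wt\BQ_\e^{-1}g$ and $v=\BH^{-1}g$. Test the equation for $v$ against $u_\e$ and vice versa; the difference of the resulting identities gives $\int(\wt W_\e-q)\,u_\e v\,(\ldots)$ plus a boundary term at $t=\e^{-\a}\{-a,b\}$ coming from the Dirichlet/Neumann condition, which is harmless because both $v$ and its derivative are superexponentially small there while $u_\e$ is controlled in $H^1$ by $\|g\|$. The bulk term splits: on $|t|\le R$ it is bounded by $\sup_{|t|\le R}|\wt W_\e(t)-q(t)|\cdot\|u_\e\|\,\|v\|\cdot(\text{polynomial in }R)$, which tends to $0$ as $\e\to0$ for fixed $R$ by the expansion above; on $|t|>R$ one uses that $q(t)\to\infty$, hence $v$ and (by an energy estimate using positivity of $\wt W_\e$ for large $|t|$, guaranteed by condition ({\it i}) that $h(x)<M$ away from $0$) $u_\e$ are small in a weighted norm, so the tail contribution is $o(1)$ uniformly in $\e$ as $R\to\infty$. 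Choosing $R$ large then $\e$ small yields $\|u_\e-v\|\to0$.

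The main obstacle I expect is twofold. First, obtaining the a priori estimates on $u_\e$ uniformly in $\e$ — in particular an $H^1$ bound and a tightness bound $\int(1+q)|u_\e|^2\le C\|g\|^2$ — because the potential $\wt W_\e$ is only comparable to $q$ near the origin, and away from the origin $W_\e$ is of order $\e^{-2}$ (positive, by ({\it i})), so one must argue that this large positive potential only helps with localization and does not spoil the estimate; the $(\pi^2/3+1/4)h'^2/h^2$ term in \eqref{1:4g}, although $O(1)$, must be absorbed, and near $x=0$ it is genuinely $O(1)$ while its rescaled version is $O(\e^{2\a})$, hence negligible. Second, the boundary terms from the $DN$-problem (Neumann at $\p I$) require a separate but routine check that they vanish in the limit; this is where the $D$- and $DN$-cases are handled uniformly once one notes $v=X_j$-type decay kills everything at the rescaled endpoints. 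Everything else is bookkeeping with the scaling relation $\a(m+2)=2$.
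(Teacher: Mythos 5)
Your argument is workable, but it is a genuinely different---and much longer---route than the paper's. The paper's proof of this theorem occupies a few lines: the $D$-case is simply quoted from theorem 1.3 of \cite{FS}, and the $N$-case is reduced to it by the triangle inequality together with the estimate $\|\BQ_{\e,N}^{-1}-\BQ_{\e,D}^{-1}\|=O(\e^2)$ of \eqref{10:y}, itself obtained from Birman's lemma \cite{B2} on the change of boundary condition; since $\e^{-2\a}\e^{2}\to 0$, the two boundary conditions are indistinguishable at the relevant scale. Your direct rescale-and-compare scheme is, in substance, the strategy the paper uses in section \ref{inf} for $I=\R$ (propositions \ref{3:thm} and \ref{3:thm1}): strong resolvent convergence from local convergence of the rescaled potentials, upgraded to norm convergence by a domination/tightness argument based on $\e^{2\a}W_\e(t\e^\a)\ge\s\min(|t|^m,\e^{2\a-2})$. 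What your route buys is independence from \cite{FS} and from Birman's lemma, and a uniform treatment of the $D$- and $N$-cases; what it costs is that the two steps you currently only gesture at are exactly where the content lies.

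First, compactness of $\BH^{-1}$ plus strong convergence does \emph{not} by itself give norm convergence (projections onto weakly null vectors converge strongly to $0$ but not in norm); your uniform bound and tightness do repair this, via a collective-compactness lemma for self-adjoint families, but that lemma must be stated and proved---the paper's proposition \ref{3:thm} plays precisely this role in section \ref{inf}. Second, your tightness estimate presupposes the uniform bound $\|u_\e\|\le C\|g\|$, i.e. $\bq_\e[\chi]\ge c\e^{-2\a}\|\chi\|^2$ on $H^1(I)$ \emph{without} boundary conditions. This is exactly the inequality \eqref{4:5} that the paper identifies as the only point requiring a new argument in the Neumann case and proves via Birman's lemma. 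Your ``energy estimate from positivity'' can be made to work: after rescaling one has the potential bound $\s\min(|t|^m,1)$ for small $\e$, and an elementary one-dimensional inequality such as $\int_{-1}^{1}|\chi|^2\le C\bigl(\int_{1}^{2}|\chi|^2+\int_{-1}^{2}|\chi'|^2\bigr)$, valid for all $\chi\in H^1$ with no boundary condition, then yields the uniform lower bound. But this must be written out; it is the crux of the $N$-case. (A minor slip: $2\a-2+\a(m+1)=\a(m+3)-2=\a$, not $0$; your conclusion that the error is $O(\e^\a)(1+|t|^{m+1})$ is nonetheless correct.)
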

For the $D$-case this is a reformulation of theorem  1.3 in
\cite{FS}, see eq. (1.9) therein.
\vskip0.2cm

 Note that theorems \ref{1:t2} and \ref{1:t1} give stronger a result
than theorem \ref{1:t0}. For instance, they imply that
the convergence
\[ \e^{-2\a}\left(\l_j(\e)-\frac{\pi^2}{M^2\e^2}\right)^{-1}\to\mu_j^{-1},\]
cf. \eqref{1:s1}, is uniform in $j$.

Derivation of theorem \ref{1:t0} from theorems \ref{1:t2} and \ref{1:t1}
was explained in \cite{FS}, and we do not reproduce it here.

\section{Proof of theorem \ref{1:t2}: general scheme}\label{scheme}
For the $D$-case the proof is given in \cite{FS},
section 3. For the $DN$-case the scheme remains the same, and we see it
useful to present it in an abstract form.

\vskip0.2cm

 Let $\GH_\e,\ 0<\e\le \e_0$, be a family of separable
Hilbert spaces. Keeping in mind our original problem, it is sufficient
to consider them real.
Let $\CH_\e\subset\GH_\e$ be a family of their
(closed) subspaces. We denote by $\CH^\e$ the orthogonal
complement of $\CH_\e$ in $\GH_\e$, so that
\begin{equation*}
\GH_\e=\CH_\e\oplus\CH^\e.
\end{equation*}
Below $\BP_\e$ and $\BP^\e$ stand for the orthogonal projections
in $\GH_\e$ onto the subspaces $\CH_\e$ and $\CH^\e$ respectively,
and for an arbitrary element $\psi\in\GH_\e$ we standardly write
\[ \psi_\e=\BP_\e \psi,\qquad \psi^\e=\BP^\e \psi.\]

For each $\e$, let $\ga_\e[\psi_1,\psi_2]$ be a
symmetric bilinear form in $\GH_\e$,
defined for $\psi_1,\psi_2$ lying in a dense domain $\gd_\e$. We suppose
that the corresponding quadratic form $\ga[\psi]:=\ga_\e[\psi,\psi]$ is
non-negative and closed. We write
$\GA_\e$ for the self-adjoint operator on $\GH_\e$, generated by
$\ga_\e$.

Suppose that
\begin{equation*}
\psi\in\gd_\e\ \Longrightarrow\ \psi_\e \in\gd_\e.
\end{equation*}
Then also $\psi\in\gd_\e\ \Longrightarrow\ \psi^\e \in\gd_\e$ and
the sets
\[\bd_\e:=\{\psi_\e:\psi\in\gd_\e\}, \qquad \bd^\e:=\{\psi^\e:\psi\in\gd_\e\}\]
are dense in $\CH_\e$ and in $\CH^\e$, respectively. Indeed, if,
say, $\wt \psi\in\CH_\e$ and $(\psi_\e,\wt \psi)=0$ for all
$\psi_\e\in\bd_\e$, then also $(\psi,\wt \psi)=0$ for all
$\psi\in\gd_\e$, and hence $\wt \psi=0$.

Let us consider the families $\ba_\e=\ga_\e\res\bd_\e$ and
$\ba^\e=\ga_\e\res\bd^\e$ of quadratic forms in $\CH_\e$ and in
$\CH^\e$. Both $\ba_\e$ and $\ba^\e$ are closed. We denote by
$\CA_\e$ and $\CA^\e$ the corresponding self-adjoint operators on
$\CH_\e$ and on $\CH^\e$.

\vskip0.2cm

 The quadratic form $\ga_\e$ decomposes as
\begin{gather}
\ga_\e[\psi]=\ba_\e[\psi_\e]+\ba^\e[\psi^\e]+2\ga_\e[\psi_\e,\psi^\e],
\qquad \psi\in\gd_\e.
\label{10:4x}
\end{gather}

Now, we make the following assumptions about the behavior of each
term in \eqref{10:4x}.
\begin{gather}
\ba_\e[\psi_\e]\ge c(\e)\|\psi_\e\|^2,\qquad \forall \psi_\e\in\bd_\e,
\qquad c(\e)\ge c_0>0;\label{10:5}
\end{gather}
\begin{gather}
\ba^\e[\psi^\e]\ge p(\e)\|\psi^\e\|^2,\qquad \forall \psi^\e\in\bd^\e;
\label{10:6}\\
\qquad p(\e)\to\infty, \qquad c(\e)=O(p(\e));\label{10:6x}
\end{gather}
\begin{gather}
 |\ga_\e[\psi_\e,\psi^\e]|^2\le
q^2(\e)\ba_\e[\psi_{\e}]\ga_\e[\psi^\e],\qquad \forall
\psi\in\gd_\e,\qquad q(\e)\to 0.\label{10:7}
\end{gather}

\begin{prop}\label{10:lem}
Let the conditions \eqref{10:5} -- \eqref{10:7} be satisfied. Then
for $\e$ small enough the operator $\GA_\e$ is positive definite,
and
\begin{equation}\label{10:9}
\|\GA_\e^{-1}-\CA_\e^{-1}\oplus\bsymb0_{\CH^\e}\|\le
p(\e)^{-1}+Cq(\e)c(\e)^{-1}.
\end{equation}
\end{prop}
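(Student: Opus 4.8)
I would argue entirely through the variational (first Green) identities for $\GA_\e$, $\CA_\e$ and $\CA^\e$, testing them against the components $u_\e,u^\e$ of $u=\GA_\e^{-1}f$, and absorbing the off-diagonal term controlled by \eqref{10:7} through a short coupled-inequality estimate. First comes positive definiteness of $\GA_\e$ for small $\e$ (so that $\GA_\e^{-1}$ is a bounded operator): \eqref{10:5} and \eqref{10:6} already give $\CA_\e\ge c_0$ and $\CA^\e\ge p(\e)$, while for $\GA_\e$ one combines the splitting \eqref{10:4x} with \eqref{10:7} and the inequality $2st\le s^2+t^2$ to obtain, for $\psi\in\gd_\e$,
\begin{equation*}
\ga_\e[\psi]\ge\ba_\e[\psi_\e]+\ba^\e[\psi^\e]-2q(\e)\sqrt{\ba_\e[\psi_\e]\,\ba^\e[\psi^\e]}\ge(1-q(\e))\bigl(\ba_\e[\psi_\e]+\ba^\e[\psi^\e]\bigr)\ge(1-q(\e))\min\{c_0,p(\e)\}\,\|\psi\|^2,
\end{equation*}
which for $\e$ small enough (so that $q(\e)<1/2$ and $p(\e)>c_0$) is $\ge(c_0/2)\|\psi\|^2$. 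It is also convenient to record the algebraic fact $\gd_\e=\bd_\e\oplus\bd^\e$ (each $\psi\in\gd_\e$ equals $\psi_\e+\psi^\e$ with $\psi_\e\in\bd_\e$, $\psi^\e\in\bd^\e$, and conversely $\bd_\e,\bd^\e\subset\gd_\e$): thus $\ga_\e,\ba_\e,\ba^\e$ all sit over one common form domain, and since $\ga_\e$ restricted to $\bd_\e$ (resp.\ $\bd^\e$) equals $\ba_\e$ (resp.\ $\ba^\e$), bilinearity freely expands expressions such as $\ga_\e[u,w_\e]=\ba_\e[u_\e,w_\e]+\ga_\e[u^\e,w_\e]$ for $w_\e\in\bd_\e$.

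Now fix $f\in\GH_\e$ and set $u=\GA_\e^{-1}f$, $v=\CA_\e^{-1}f_\e\in\CH_\e$. Since $v\in\CH_\e$, the $\CH_\e$- and $\CH^\e$-components of $u-v$ are $u_\e-v$ and $u^\e$, so $\|u-v\|\le\|u_\e-v\|+\|u^\e\|$ and it suffices to bound these two quantities. Put $X=\ba_\e[u_\e]^{1/2}$, $Y=\ba^\e[u^\e]^{1/2}$. Testing $\ga_\e[u,w]=(f,w)$ (valid for all $w\in\gd_\e$; note $u_\e,u^\e\in\gd_\e$ by hypothesis) against $w=u_\e$ and $w=u^\e$, and using $\ga_\e[u_\e,u_\e]=\ba_\e[u_\e]$, $\ga_\e[u^\e,u^\e]=\ba^\e[u^\e]$, the bound \eqref{10:7} for the cross terms, $(f,u_\e)=(f_\e,u_\e)$, $(f,u^\e)=(f^\e,u^\e)$, and \eqref{10:5}, \eqref{10:6} to replace $\|u_\e\|,\|u^\e\|$ by $X/\sqrt{c(\e)},Y/\sqrt{p(\e)}$, one gets the coupled inequalities
\begin{equation*}
X\le\frac{\|f_\e\|}{\sqrt{c(\e)}}+q(\e)Y,\qquad Y\le\frac{\|f^\e\|}{\sqrt{p(\e)}}+q(\e)X.
\end{equation*}
For $q(\e)<1$ these solve to $Y\le(1-q(\e)^2)^{-1}\bigl(\|f^\e\|/\sqrt{p(\e)}+q(\e)\|f_\e\|/\sqrt{c(\e)}\bigr)$; since $\|u^\e\|\le Y/\sqrt{p(\e)}$ and $\|f_\e\|,\|f^\e\|\le\|f\|$ this yields
\begin{equation*}
\|u^\e\|\le\frac{1}{1-q(\e)^2}\left(\frac{1}{p(\e)}+\frac{q(\e)}{\sqrt{c(\e)p(\e)}}\right)\|f\|.
\end{equation*}
The hypothesis $c(\e)=O(p(\e))$ converts $q(\e)/\sqrt{c(\e)p(\e)}$ into $O(q(\e)/c(\e))$, and the factor $(1-q(\e)^2)^{-1}=1+O(q(\e)^2)$ contributes only further $O(q(\e)/c(\e))$ terms (using once more $q(\e)^2/p(\e)=O(q(\e)/c(\e))$), leaving the coefficient of $p(\e)^{-1}$ equal to $1$; hence $\|u^\e\|\le\bigl(p(\e)^{-1}+C_1q(\e)c(\e)^{-1}\bigr)\|f\|$.

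For the second term, subtract the identities $\ga_\e[u,w_\e]=(f_\e,w_\e)$ and $\ba_\e[v,w_\e]=(f_\e,w_\e)$, valid for $w_\e\in\bd_\e$; with $\ga_\e[u_\e,w_\e]=\ba_\e[u_\e,w_\e]$ this becomes $\ba_\e[u_\e-v,w_\e]=-\ga_\e[u^\e,w_\e]$ on $\bd_\e$, and by form-continuity — legitimate since \eqref{10:7} and the closedness of $\ba_\e$ make both sides $\ba_\e$-continuous in $w_\e$ — on the form-closure of $\bd_\e$, which contains $u_\e-v$. Testing at $w_\e=u_\e-v$ and invoking \eqref{10:7} gives $\ba_\e[u_\e-v]\le q(\e)Y\,\ba_\e[u_\e-v]^{1/2}$, hence $\ba_\e[u_\e-v]^{1/2}\le q(\e)Y$, and by \eqref{10:5} $\|u_\e-v\|\le q(\e)Y/\sqrt{c(\e)}$. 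Inserting the bound for $Y$ and using $c(\e)=O(p(\e))$ once more gives $\|u_\e-v\|\le C_2q(\e)c(\e)^{-1}\|f\|$. Altogether
\begin{equation*}
\|u-v\|\le\|u^\e\|+\|u_\e-v\|\le p(\e)^{-1}\|f\|+(C_1+C_2)\,q(\e)c(\e)^{-1}\|f\|,
\end{equation*}
and taking the supremum over $\|f\|=1$ proves \eqref{10:9} with $C=C_1+C_2$.

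The substantive point is the absorption of the coupling form $\ga_\e[\cdot,\cdot]$ on $\bd_\e\times\bd^\e$: it works precisely because of \eqref{10:7} together with the compatibility condition $c(\e)=O(p(\e))$, the latter being exactly what turns the mixed term $q(\e)/\sqrt{c(\e)p(\e)}$ into $O(q(\e)/c(\e))$. I expect the only genuinely delicate bookkeeping to be keeping the coefficient of $p(\e)^{-1}$ equal to $1$ rather than merely $1+o(1)$: one must carry the factor $(1-q(\e)^2)^{-1}$ explicitly and check that its excess over $1$ feeds only into the $q(\e)c(\e)^{-1}$ term. The density and closure remarks needed to test against $u_\e-v$ (which a priori lies only in the form-closure of $\bd_\e$, not in $\bd_\e$ itself) are routine, since \eqref{10:7} and the closedness of the forms make all the functionals involved continuous in the appropriate form norms.
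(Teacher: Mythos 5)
Your proof is correct, and it takes a genuinely different route from the paper's. The paper introduces the decoupled operator $\wh\GA_\e$ generated by the diagonal form $\ba_\e[\psi_\e]+\ba^\e[\psi^\e]$, shows via \eqref{10:7} that $\ga_\e$ and $\wh\ga_\e$ are two-sided comparable (whence positive definiteness and $\|\GA_\e^{-1}\|\le Cc(\e)^{-1}$), and then estimates $\|\GA_\e^{-1}-\wh\GA_\e^{-1}\|\le Cq(\e)c(\e)^{-1}$ by evaluating the bilinear-form difference at the cleverly chosen pair $\psi_1=\wh\GA_\e^{-1}f$, $\psi_2=\GA_\e^{-1}g$; the bound \eqref{10:9} then drops out of the triangle inequality together with $\|(\CA^\e)^{-1}\|\le p(\e)^{-1}$, which is why the coefficient of $p(\e)^{-1}$ is exactly $1$ with no bookkeeping at all. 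You instead work at the level of solutions: energy (Galerkin-type) identities for $u=\GA_\e^{-1}f$ tested against its two components, a coupled system for $X=\ba_\e[u_\e]^{1/2}$, $Y=\ba^\e[u^\e]^{1/2}$, and a separate comparison of $u_\e$ with $v=\CA_\e^{-1}f_\e$. Both arguments are sound quadratic-form perturbation proofs; the paper's buys the clean constant $1$ in front of $p(\e)^{-1}$ for free, while yours avoids introducing the auxiliary operator $\wh\GA_\e$ and its square roots, at the cost of the $(1-q(\e)^2)^{-1}$ tracking, which you carry out correctly (the absorption of $q^2/p$ and $q/\sqrt{cp}$ into $O(q/c)$ via $c(\e)=O(p(\e))$ is exactly right). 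Two small remarks: your caution about the form-closure of $\bd_\e$ is unnecessary, since the paper states that $\ba_\e$ is closed, so $\bd_\e$ is already complete in the form norm and contains both $\dom(\CA_\e)\ni v$ and $u_\e$; and since $u_\e-v\perp u^\e$ you could even replace the final triangle inequality by the Pythagorean identity, though it changes nothing.
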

\begin{proof}
Along with $\ga_\e[\psi]$, let us consider its diagonal part, i.e.
the quadratic form
\[\wh\ga_\e[\psi]=\ba_\e[\psi_\e]+\ba^\e[\psi^\e],\qquad \psi\in\gd_\e.\]
By \eqref{10:5} and \eqref{10:6}, $\wh\ga_\e$ is positive
definite. It is also closed, since its both components are closed.
Let $\wh\GA_\e$ be the corresponding self-adjoint
operator on $\GH_\e$. By \eqref{10:6x}, we have for small $\e$:
\begin{equation*}
\|\wh\GA_\e^{-1}\|\le Cc(\e)^{-1},
\end{equation*}
with some constant $C>0$. Besides, \eqref{10:7} implies
\[ |\ga_\e[\psi_\e,\psi^\e]|\le q(\e)\wh\ga_\e[\psi]\]
and hence,
\[|\ga_\e[\psi]-\wh\ga_\e[\psi]|=
2|\ga_\e[\psi_\e,\psi^\e]|\le2q(\e)\wh\ga_\e[\psi].\]
For $\e$ small enough, so that $q(\e)\le 1/4$, this implies
\[1/2\,\wh\ga_\e[\psi]\le\ga_\e[\psi]\le3/2\,\wh\ga_\e[\psi],\qquad\forall
\psi\in\gd_\e.
\]
Hence, for such $\e$ the operator $\GA_\e$ is positive definite
(rather than only non-negative, as it was originally assumed), and
\begin{equation*}
\|\GA_\e^{-1}\|\le2C{c(\e)}^{-1}.
\end{equation*}
Note also that \eqref{10:6} is equivalent to
\begin{equation}\label{10:8x}
    \|(\CA^\e)^{-1}\|\le p(\e)^{-1}.
\end{equation}

 Further, for any $\psi_1,\psi_2\in \gd_\e$ and
$\e$ small enough, one has

\begin{gather*}
\left|(\GA_\e^{1/2}\psi_1,\GA_\e^{1/2}\psi_2)-
(\wh\GA_\e^{1/2}\psi_1,\wh\GA_\e^{1/2}\psi_2)
\right|=|\ga_\e[\psi_1,\psi_2]-\wh\ga_\e[\psi_1,\psi_2]|\\
=|\ga_\e[\psi_{1,\e},\psi_2^\e]+\ga_\e[\psi_1^\e,\psi_{2,\e}]|
\le q(\e)\left((\ba_\e[\psi_{1,\e}]\ga_\e[\psi_2^\e])^{1/2}+
(\ba_\e[\psi_{2,\e}]\ga_\e[\psi_1^\e])^{1/2}\right)\\
\le
 2q(\e)(\wh\ga_\e[\psi_{1}]\wh\ga_\e[\psi_2])^{1/2}\le
2\sqrt2\,q(\e)(\wh\ga_\e[\psi_{1}]\ga_\e[\psi_2])^{1/2}.
\end{gather*}
In the last formula we take $\psi_1=\wh\GA_\e^{-1}f,\ \psi_2=\GA_\e^{-1}g$;
here $f,g\in \GH_\e$ are arbitrary elements. Then
\begin{gather*}
|(\wh\GA_\e^{-1}f,g)-(\GA_\e^{-1}f,g)|\\
 \le 2\sqrt2\,q(\e)\bigl((\GA_\e^{-1}g,g)
(\wh\GA_\e^{-1}f,f)\bigr)^{1/2}\le 4Cq(\e)c(\e)^{-1}\|f\|\|g\|,
\end{gather*}
and therefore
\begin{equation}\label{10:z}
\|\GA_\e^{-1}-\wh\GA_\e^{-1}\|\le 4Cq(\e)c(\e)^{-1}.
\end{equation}
 Since
$\wh\GA_\e^{-1}=\CA_\e^{-1}\oplus(\CA^\e)^{-1}$, we conclude that
\[\|\wh\GA_\e^{-1}-\CA_\e^{-1}\oplus\bsymb0_{\CH^\e}\|=\|(\CA^\e)^{-1}\|\]
Together with \eqref{10:8x} and \eqref{10:z}, this leads to
\eqref{10:9}.
\end{proof}

\section{Proof of theorems \ref{1:t2} and \ref{1:t1} (the DN-case)}\label{dn}
\subsection{Preliminaries}\label{pr}
To prove theorem \ref{1:t2}, we use proposition \ref{10:lem}
with $\GH_\e=L^2(\Om_\e)$ and $\CH_\e=\CL_\e$. As in section \ref{red}, we
identify operators $\BT$ acting in $\CL_\e$ with their images
$\bsymb\Pi_\e^{-1}\BT\bsymb\Pi_\e$ acting in $L^2(I)$. Here
$\bsymb\Pi_\e$ is the isometry given by \eqref{1:2z}. The basic
bilinear form is
\begin{equation}\label{4:x}
\ga_\e[\psi_1,\psi_2]=\int_{\Om_\e}\left(\frac{\p \psi_1}{\p x}
\frac{\p \psi_2}{\p x}+ \frac{\p \psi_1}{\p y} \frac{\p \psi_2}{\p y}
-\frac{\pi^2}{M^2\e^2}\psi_1\psi_2\right)dxdy
\end{equation}
defined for $\psi_1,\psi_2\in \gd_\e:= H^{1,d}(\Om_\e)$. The corresponding
quadratic form is given by \eqref{0:3}.
For each
$\psi\in\bd_\e=H^{1,d}(\Om_\e)\cap\CL_\e$ there exists one and
only one $\chi\in H^1(I)$ such that $\psi_\e=\psi_{\e,\chi}$, and
then $\ba_\e[\psi_\e]=\bq_\e[\chi]$. It is immediate that the
operator $\CA_\e$ is nothing but $\BQ_{\e,DN}$, and hence, the
proof of theorem \ref{1:t2} for the DN-case reduces to the
proving the  inequalities \eqref{10:5} -- \eqref{10:7}
with appropriate constants $c(\e), p(\e)$ and $q(\e)$.

\subsection{Proof of \eqref{10:5}.}\label{42}
The inequality \eqref{10:5} is the only inequality, the proof
of which requires a new argument compared with \cite{FS}. There are
several ways to prove \eqref{10:5}. We choose a way that is based on a
remarkable result due to Birman, see
\cite{B1}, \cite{B2}. This result belongs to the general theory of
self-adjoint extensions of symmetric operators.

The inequality \eqref{10:5} can be rewritten as
\begin{equation*}
\bq_\e[\chi]\ge c\e^{-2\a}\|\chi\|^2,\qquad \chi\in H^1(I),
\end{equation*}
or equivalently,
\begin{equation}\label{4:5}
\|\BQ_{\e,N}^{-1}\|\le c^{-1}\e^{2\a}.
\end{equation}
The similar inequality for $\BQ_{\e,D}^{-1}$ was proved in
\cite{FS}, lemma 2.1. Therefore, it is sufficient to estimate the
norm of the operator $\BQ_{\e,N}^{-1}-\BQ_{\e,D}^{-1}$. For
technical reasons, it is more convenient to deal with the operator
\begin{equation*}
\BS_\e=(\BQ_{\e,N}+1)^{-1}-(\BQ_{\e,D}+1)^{-1}.
\end{equation*}
Its norm can be estimated by using lemma 3.1 in \cite{B2}. When applied
to the operators in question, it yields
\begin{equation}\label{1:m2}
\|\BS_\e\|=\max\left\{\CR(u):\ u\in H^1(I),\
-u''+(1+W_\e(x))u=0\right\},
\end{equation}
where
 \[ \CR(u)=\frac{\int_I|u|^2dx}
{\int_I\left(|u'|^2+(1+W_\e(x))|u|^2\right)dx}.\] The weak form of
the differential equation from \eqref{1:m2} is
\begin{equation}\label{1:c1b}
 \int_I\left(u'\phi'+(1+W_\e(x))u\phi\right)dx=0,\qquad
\forall\phi\in H^{1,0}(I).
\end{equation}
Take $\phi(x)=u(x)\z_{ab}(x)$; the function $\z_{ab}(x)$ is
described below. First, we fix a function
$\z\in C^\infty(0,2)$ such that
\[\z(t)=1,\ t\le 1;\ \z(t)=(2-t)^2,\ t>3/2;\ 0\le\z(t)\le 1\
{\text{everywhere}}.\]
Denote $ K=\max \z'(t)^2/\z(t)$, and set
\[\z_{ab}(x)=\begin{cases}b^2\z(2x/b),\ & x\in[0,b);\\
a^2\z(2|x|/a), & x\in(-a,0].\end{cases}\] \vskip0.1cm\noindent We
have $\z_{ab}\in C^\infty(I)$, $\z(-a)=\z(b)=0$,  and $\z_{ab}'(x)^2\le
4K\z_{ab}(x)$. The function $\phi=u\z_{ab}$ lies in $H^{1,0}(I)$,
and we conclude from \eqref{1:c1b} that
\begin{gather*}
2\int_I\left({u'}^2+(1+W_\e(x))u^2\right)\z_{ab} dx=-2\int_I u'u\z'_{ab}dx\\
\le \int_I {u'}^2\z_{ab} dx+ \int_I
u^2\frac{(\z'_{ab})^2}{\z_{ab}}dx \le \int_I {u'}^2\z_{ab}
dx+4K\int_{\wt I} u^2dx
\end{gather*}
where $\wt I=(-a,-a/2)\cup(b/2,b)$.
>From here we derive that
\[\int_I\left({u'}^2+2(1+W_\e(x))u^2\right)\z_{ab}dx\le
4K\int_{\wt I} u^2dx.\] Hence,
\[\int_{a/2}^{b/2}u^2 dx\le 2K\int_{\wt I} u^2dx.\]
The conditions on $h(x)$ imply the inequality
\[ W_\e(x)\ge \s\e^{-2}|x|^m,\qquad \forall\e>0,\ x\in I,\]
with some $\s>0$, see (2.2) in \cite{FS}. This yields
\[\CR(u)\le (1+2K)\frac{\int_{\wt I} u^2dx}
{\int\limits_{\wt I} W_\e(x)u^2dx}\le
\frac{1+2K}{\s(\min(a,b)/2)^m}\e^{2}.\] So,
$\|\BS_\e\|=O(\e^{2})$. From Hilbert resolvent formula we conclude
that also
\begin{equation}\label{10:y}
    \|\BQ_{\e,N}^{-1}-\BQ_{\e,D}^{-1}\|=O(\e^2).
\end{equation}
By \eqref{1:s20}, $\a\le2/3$ and hence,
$\|\BQ_{\e,D}^{-1}\|=O(\e^{2\a})$ yields the same estimate for
$\|\BQ_{\e,N}^{-1}\|$. This completes the proof of \eqref{4:5} and
hence, that of \eqref{10:5}, with $c(\e)=C\e^{-2\a}$.
\subsection{Proof of \eqref{10:6}}\label{34}
This inequality is the easiest to prove.
The inclusion $\psi\in\CH^\e$
means that
\begin{equation}\label{2:1}
\int_0^{\e h(x)}\psi(x,y)\sin\frac{\pi y}{\e h(x)}dy=0,\qquad
{\text{a.a.}}\ x\in I.
\end{equation}
In other words, the function $\psi(x,\cdot)$ is orthogonal to the
first eigenfunction of the operator $-u''_{yy}$ on the interval
$(0,\e h(x))$, with the Dirichlet boundary conditions at its ends.
Thus, every function $\psi\in\gd_\e\cap\CH^\e$ satisfies an estimate similar to
\eqref{0:2p}; its right hand side contains an additional factor $4$.
Therefore
\begin{equation*}
\|\psi\|^2\le \frac{M^2\e^2}{3\pi^2}\ga_\e[\psi],\qquad
\forall\psi\in \bd^\e
\end{equation*}
which means that \eqref{10:6} is satisfied with
\begin{equation}\label{2:1a}
p(\e)=3\pi^2M^{-2}\e^{-2}.
\end{equation}
In particular, \eqref{2:1a} implies \eqref{10:6x}.
If $m>1$ in \eqref{1:1} then the symbol $O$ in
\eqref{10:6x} can be replaced by $o$.

\subsection{End of the proof of theorem \ref{1:t2}.}\label{end2}
The proof of \eqref{10:7} repeats the proof
of the similar inequality in
\cite{FS}. Nevertheless, we outline the argument.

For $\psi\in\bd^\e$ we can integrate \eqref{2:1} by parts in $y$
and differentiate it in $x$. This results in the equalities that
hold for a.a. $x\in I$:
\begin{gather*}
\int\limits_0^{\e h(x)}\psi'_y(x,y)\cos\frac{\pi y}{\e h(x)}dy=0;\\
\int\limits_0^{\e h(x)}\psi'_x(x,y)\sin\frac{\pi y}{\e h(x)}dy=
\frac{\pi}{\e}\frac{h'(x)}{h^2(x)} \int\limits_0^{\e h(x)}
y\psi(x,y)\cos\frac{\pi y}{\e h(x)}dy.
\end{gather*}
Let $\psi\in\gd_\e$ and $\psi_\e=\psi_{\e,\chi}$ with some
$\chi\in H^1(I)$. The off-diagonal part of $\ga_\e[\psi]$ reduces
to the form
\begin{gather}
 \ga_\e[\psi_\e,\psi^\e]=\int_{\Om_\e}(\psi_{\e,\chi})'_x(\psi^{\e})'_xdxdy
\label{2:1z}\\
=\frac{\sqrt2\pi}{\e^{3/2}}\int_{\Om_\e} \wt h(x)\cos\frac{\pi
y}{\e h(x)}\left(\phi'\psi -\phi \psi'_x\right)ydxdy\notag
\end{gather}
where
\[ \wt h(x)=\frac{h'(x)}{h^2(x)},\qquad \phi(x)=\frac{\chi(x)}{h^{1/2}(x)}.\]
Now the estimate \eqref{10:7} with $q(\e)=C\e^\a$
follows by Cauchy-Schwartz inequality. \vskip0.2cm

Applying the estimate \eqref{10:9} to the operator $\GA_{\e,DN}$,
we conclude that
\[ \left\|\GA_{\e}^{-1}-\BQ_{\e}^{-1}\oplus\bsymb{0_{\CL^\e}}\right\|\le
C(\e^2+\e^{3\a});
\]
together with $\a\le2/3$, the last estimate implies \eqref{1:10}. This
completes the proof of theorem \ref{1:t2}.
\subsection{Proof of theorem \ref{1:t1}.}\label{end1}
For the $D$-case this is an equivalent reformulation of theorem
1.3 in \cite{FS}, see eq. (1.10) there. In the $DN$-case,
we apply
the inequality \eqref{10:y} which yields
\begin{gather*}
\|(\e^{2\a}\BQ_{\e,N})^{-1}\oplus\bsymb0_{L^2(\R\setminus I)}
-(\e^{2\a}\BH_\e)^{-1}\|\\
\le\|(\e^{2\a}\BQ_{\e,D})^{-1}\oplus\bsymb0_{L^2(\R\setminus I)}
-(\e^{2\a}\BH_\e)^{-1}\|+\e^{-2\a}\|\BQ_{\e,N}^{-1}-\BQ_{\e,D}^{-1}\|.
\end{gather*}
Both terms on the right tend to zero as $\e\to 0$: the first by
theorem 1.3 in \cite{FS} and the second by \eqref{10:y}, again
keeping in mind that $\a<2/3$. This completes the proof of theorem
\ref{1:t1}.

\section{The case $I=\R$: behavior of the resolvent }\label{inf}
In this and the next sections we study the case $I=\R$. The
spectrum of the Dirichlet Laplacian $\D_\e$ in $\Om_\e$ is now not
necessarily discrete, the structure of its essential component
$\s_{ess}(\D_\e)$ depends on the behavior of $h(x)$ at infinity.
In this section we study the case when $h(x)$
satisfies conditions ({\it i}) and ({\it ii}), and also the following
additional conditions:

({\it iii}) $\limsup\limits_{|x|\to\infty}h(x)< M$,
\vskip0.2cm
({\it iii$\,^\prime$}) $h'/h\in L^\infty(\R)$.

\noindent Our goal is to show that under these conditions analogues of theorems
\ref{1:t2} and \ref{1:t1} hold. A result similar to theorem \ref{1:t0} then
follows automatically,
though its formulation becomes a bit more
complicated because the operator $\D_\e$ can have non-empty essential spectrum.
\vskip0.2cm

The situation simplifies if one is interested only in the behavior of
eigenvalues.
In the next section \ref{infeig} we will give a short proof of an analogue
of statement 1) in
theorem \ref{1:t0}. For that purpose, the condition ({\it iii$\,^\prime$})
turns out to be  not necessary.

\subsection{Behavior of the resolvent: formulations and auxiliary results.}
\label{bres}
Let $I=\R$. The quadratic form \eqref{1:4a} is
positive definite and closed on the natural domain
\[\bd_\e=\{\chi\in H^1(\R): \bq_\e[\chi]<\infty\}.\]
As before, the corresponding self-adjoint operator $\BQ_\e$ is formally given
by \eqref{1:4h}. Note also that instead of two operators $\GA_{\e,D},\
\GA_{\e,DN}$ we have only one operator $\GA_\e$.

\begin{thm}\label{1:t2inf}
Let $I=\R$ and $h(x)$ satisfy the conditions
({\it i}) -- ({\it iii$\,^\prime$}).
Then the equality \eqref{1:10} holds.
\end{thm}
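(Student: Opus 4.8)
The plan is to derive \thmref{1:t2inf} from Proposition~\ref{10:lem}, applied exactly as in the $DN$-case of \secref{dn}: take $\GH_\e=L^2(\Om_\e)$, $\CH_\e=\CL_\e$, $\gd_\e=H^{1,d}(\Om_\e)$, and $\ga_\e$ the bilinear form \eqref{4:x}, so that $\CA_\e$ is identified via the isometry $\bsymb\Pi_\e$ of \eqref{1:2z} with the operator $\BQ_\e$ on $L^2(\R)$ of \secref{bres}. First I would check that the structural hypotheses of the abstract scheme survive the passage to $I=\R$: that $\ga_\e$ is non-negative and closed — non-negativity follows from \eqref{0:2a}, which rests on the pointwise bound \eqref{0:2p} and hence does not see the size of $I$ — and that $\psi\in\gd_\e\Rightarrow\psi_\e\in\gd_\e$. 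The latter amounts to $\psi_{\e,\chi}\in H^1(\Om_\e)$, and since the $x$-derivative of $\psi_{\e,\chi}$ involves the factor $h'(x)/h(x)$, it is precisely condition ({\it iii$\,^\prime$}) that secures this.

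Among the three quantitative conditions, \eqref{10:6} and \eqref{10:7} need only bookkeeping. The proof of \eqref{10:6} in \secref{34} is pointwise in $x$ — it uses only that, for $\psi\in\bd^\e$, the slice $\psi(x,\cdot)$ is $L^2(0,\e h(x))$-orthogonal to $\sin(\pi y/\e h(x))$ — so it carries over verbatim and again gives $p(\e)=3\pi^2M^{-2}\e^{-2}$, as in \eqref{2:1a}. For \eqref{10:7} the reduction \eqref{2:1z} displays the off-diagonal form through the factor $\wt h(x)\,y=\bigl(h'(x)/h^2(x)\bigr)y$, which on $\Om_\e$ is at most $\e\,|h'(x)|/h(x)$; by ({\it iii$\,^\prime$}) this is $O(\e)$, so the Cauchy--Schwarz argument of \secref{end2} applies and yields $q(\e)=C\e^\a$. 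The only spot where compactness of $I$ entered tacitly in \cite{FS} was a uniform positive lower bound on $h$; on $\R$ its role is taken over by the fact that finiteness of $\bq_\e[\chi]$ forces $\chi$ to decay wherever $h$ is small, because the term $\e^{-2}\bigl(h^{-2}-M^{-2}\bigr)$ in $W_\e$ is large there.

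The genuinely new point — and the step I expect to be the main obstacle — is the coercivity estimate \eqref{10:5}, i.e.\ $\|\BQ_\e^{-1}\|=O(\e^{2\a})$ on $L^2(\R)$; the Birman-type cut-off argument of \secref{42} used compactness of $I$ essentially. Here I would use Neumann bracketing. Fix $R>0$. Neumann decoupling at $x=\pm R$ only lowers the bottom of the spectrum, so
\[
\inf\operatorname{spec}\BQ_\e\ \ge\ \min\Bigl\{\,\inf\operatorname{spec}\BQ_\e^{(-\infty,-R]},\ \inf\operatorname{spec}\BQ_\e^{[-R,R]},\ \inf\operatorname{spec}\BQ_\e^{[R,\infty)}\,\Bigr\},
\]
where on each subinterval $\BQ_\e$ denotes $-d^2/dx^2+W_\e(x)$ with Neumann conditions at the finite endpoint(s). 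On $[-R,R]$ this is precisely the operator $\BQ_{\e,N}$ for the finite segment $[-R,R]$, whose $h$ still satisfies ({\it i}) and ({\it ii}), so the estimate \eqref{4:5} already proved in \secref{42} gives $\inf\operatorname{spec}\BQ_\e^{[-R,R]}\ge c_R\e^{-2\a}$. On the two half-lines, ({\it i}) and ({\it iii}) yield $h(x)\le M-\d_R$ for $|x|\ge R$ with some $\d_R>0$ (on the compact part $h<M$ pointwise, and $\limsup_{|x|\to\infty}h(x)<M$), whence $W_\e(x)\ge c'_R\e^{-2}$ and $\inf\operatorname{spec}\BQ_\e^{[R,\infty)}\ge c'_R\e^{-2}$. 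Since $\e^{-2}\gg\e^{-2\a}$ for small $\e$, the minimum above is $\ge c_R\e^{-2\a}$, which is \eqref{10:5} with $c(\e)=C\e^{-2\a}$. (Alternatively one may substitute $x=\e^\a t$, reducing \eqref{10:5} to a uniform-in-$\e$ lower bound for $-d^2/dt^2+\e^{2\a}W_\e(\e^\a t)$, which follows by a compactness argument from the facts that the rescaled potential dominates a multiple of $|t|^m$ near $0$ and tends to $+\infty$ on $\{|t|\ge\rho\e^{-\a}\}$; the bracketing argument is shorter.)

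With \eqref{10:5}--\eqref{10:7} established, the bound \eqref{10:9} of Proposition~\ref{10:lem} gives
\[
\bigl\|\GA_\e^{-1}-\BQ_\e^{-1}\oplus\bsymb0_{\CL^\e}\bigr\|\ \le\ p(\e)^{-1}+Cq(\e)c(\e)^{-1}\ =\ O(\e^2)+O(\e^{3\a})\ =\ O(\e^{3\a}),
\]
the last equality because $\a\le 2/3$ by \eqref{1:s20}. This is exactly \eqref{1:10}. So the one place where a new idea is needed, compared with the finite-segment case, is \eqref{10:5}, and the Neumann-bracketing trick reduces it to the finite-segment estimate \eqref{4:5} together with the trivial bound on the two half-lines.
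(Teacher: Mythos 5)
Your proposal is correct, and for the one genuinely new ingredient it takes a different route from the paper. The paper's own argument for \eqref{10:5} is to observe that $\|(\e^{2\a}\BH_\e)^{-1}\|=\|\BH^{-1}\|$ is constant, so the norm convergence \eqref{1:12} of theorem \ref{1:t1inf} immediately gives $\|\BQ_\e^{-1}\|\le C\e^{2\a}$; the coercivity is thus a corollary of the resolvent-convergence result, which itself rests on the compactness-type propositions \ref{3:thm} and \ref{3:thm1}. You instead prove \eqref{10:5} directly by Neumann decoupling of $\BQ_\e$ at $x=\pm R$: the middle piece is exactly the finite-segment operator $\BQ_{\e,N}$ on $[-R,R]$, controlled by \eqref{4:5}, and on the half-lines conditions ({\it i}) and ({\it iii}) give $\sup_{|x|\ge R}h<M$, hence $W_\e\ge c'_R\e^{-2}\gg\e^{-2\a}$. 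This is logically independent of theorem \ref{1:t1inf} and more elementary for this step (it is in the spirit of the bracketing the paper itself uses later, in section \ref{infeig}, for the eigenvalue asymptotics); the paper's route costs nothing extra since \eqref{1:12} is needed anyway for the full scheme. Your treatment of \eqref{10:6}, \eqref{10:7} (where ({\it iii$\,^\prime$}) enters through the factor $\wt h(x)y$ in \eqref{2:1z}) and the final assembly via \eqref{10:9} coincide with the paper's.
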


For the analogue of theorem \ref{1:t1} we do not need condition
({\it iii$\,^\prime$}).
\begin{thm}\label{1:t1inf}
Let $I=\R$ and $h(x)$ satisfies the conditions ({\it i}) -- ({\it iii}).
Then
\begin{equation}\label{1:12}
 \lim_{\e\to 0}\|(\e^{2\a}\BQ_\e)^{-1}
-(\e^{2\a}\BH_\e)^{-1}\|= 0.
\end{equation}
\end{thm}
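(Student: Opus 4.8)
The plan is to reduce \eqref{1:12} to the already-established finite-segment case, Theorem \ref{1:t1}, by a cutoff-and-bracketing argument, exploiting condition ({\it iii}) to control everything outside a large but fixed interval. Recall that after the substitution $x=t\e^\a$ the operator $\e^{2\a}\BH_\e$ is unitarily equivalent to the fixed operator $\BH$; hence $(\e^{2\a}\BH_\e)^{-1}$ converges (in norm, trivially, being constant up to unitary equivalence) and its range is ``essentially supported'' near $x=0$ on the $\e^\a$-scale, since the eigenfunctions $X_j$ decay superexponentially. The same scaling applied to $\BQ_\e$ on $\R$ turns $\e^{2\a}\BQ_\e$ into $-d^2/dt^2+\e^{-2\a}W_\e(\e^\a t)$, and the point is that on the region $|t|\le R$ the potential $\e^{-2\a}W_\e(\e^\a t)$ converges to $q(t)$ uniformly (this is exactly the computation underlying Theorem \ref{1:t1} in \cite{FS}), while on $|t|\ge R$ condition ({\it iii}) forces $W_\e(x)\ge \text{const}\cdot\e^{-2}$, i.e. $\e^{-2\a}W_\e(\e^\a t)\ge \text{const}\cdot\e^{-2(1+\a)}\to\infty$, a huge positive barrier.

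First I would fix $R>0$ large and introduce Dirichlet--Neumann bracketing at the points $x=\pm R\e^\a$ (in the unscaled variable). Bracketing gives, for the scaled operators,
\begin{equation*}
\bigl(-d^2/dt^2+\e^{-2\a}W_\e(\e^\a\cdot)\bigr)_N^{(-R,R)}\oplus(\cdots)_N^{\text{ext}}
\le \e^{2\a}\BQ_\e \le
\bigl(-d^2/dt^2+\e^{-2\a}W_\e(\e^\a\cdot)\bigr)_D^{(-R,R)}\oplus(\cdots)_D^{\text{ext}},
\end{equation*}
and likewise for $\e^{2\a}\BH_\e$ with the true potential $q$. On the exterior pieces $|t|>R$ the potential is bounded below by $c\,\e^{-2(1+\a)}$, so the inverses of those exterior operators have norm $O(\e^{2(1+\a)})\to 0$; thus in the bracketing the exterior contributions are negligible and one is left comparing the interior Neumann (resp. Dirichlet) operators on $(-R,R)$. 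There, since $\e^{-2\a}W_\e(\e^\a t)\to q(t)$ uniformly on $[-R,R]$, the interior resolvents of $\e^{2\a}\BQ_\e$ converge in norm to the corresponding interior resolvents for $q$; and those, in turn, differ from $(\e^{2\a}\BH_\e)^{-1}$ by $O(e^{-cR})$ because $\BH$'s low eigenfunctions live near the origin and truncation at $\pm R$ costs only the superexponential tail. Sending $\e\to0$ and then $R\to\infty$ yields \eqref{1:12}.

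The main obstacle is making the bracketing estimate quantitative and uniform in $\e$: specifically, converting the operator inequalities between the full-line and the cut operators into a norm bound on the difference of resolvents, and checking that the ``error'' terms (exterior resolvent norms, the $O(e^{-cR})$ truncation of $\BH$, and the uniform convergence rate of $\e^{-2\a}W_\e(\e^\a t)$ to $q(t)$ on $[-R,R]$) can genuinely be made small in the right order of limits. One clean way is to avoid bracketing of $\BQ_\e$ itself and instead work directly with the quadratic forms: split $\chi=\chi_{\text{in}}+\chi_{\text{out}}$ via a smooth partition of unity subordinate to $|x|\lessgtr R\e^\a$ (with gradient $O(\e^{-\a})$, absorbed by the large barrier), estimate $\bq_\e$ from below on the outer part using ({\it iii}), and apply the already-proven finite-segment Theorem \ref{1:t1} verbatim to the inner part on the segment $[-R\e^\a,R\e^\a]$ — noting that condition ({\it iii$\,^\prime$}) is \emph{not} used here because the barrier term, not the $h'/h$ term, is what tames the exterior. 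The rest is routine: Hilbert resolvent identity, triangle inequality, and $\lim_{R\to\infty}\lim_{\e\to0}$.
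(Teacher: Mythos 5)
Your route is genuinely different from the paper's. The paper does not bracket at all: it rescales once and for all, setting $V_\e(t)=\e^{2\a}W_\e(t\e^\a)$ (note the factor is $\e^{2\a}$, not $\e^{-2\a}$ as in your sketch), observes that $V_\e\to q$ uniformly on compacts and that $V_\e(t)\ge V_\e^\circ(t):=\s_1\min(|t|^m,\e^{-2+2\a})$, and then invokes two abstract statements proved in section \ref{inf}: strong resolvent convergence (via Kato) plus the fact that the inverse of the Schr\"odinger operator with the dominating potential $V_{\e^*}^\circ$ lies within $1/c(\e^*)$ of the compact operators, where $c(\e^*)=\liminf_{|t|\to\infty}V_{\e^*}^\circ\to\infty$. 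Proposition \ref{3:thm} then upgrades strong to norm convergence with error $\le 1/c(\e^*)$, and one lets $\e^*\to 0$. This soft argument works directly on the whole line, needs no cutoff, no boundary-condition comparison, and no quantitative rate; your bracketing scheme is essentially what the paper reserves for the eigenvalue-only statement in section \ref{infeig} (Theorem \ref{1:t4}), where it is indeed much simpler — but upgrading it to norm resolvent convergence costs you exactly the extra work you flag as ``the main obstacle.''

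Two concrete points in your sketch need repair. First, the exterior barrier is overstated: condition (\emph{iii}) gives $W_\e(x)\ge c_0\e^{-2}$ only for $|x|$ of order $1$, i.e.\ $|t|\gtrsim\e^{-\a}$; on the intermediate range $R\le|t|\lesssim\e^{-\a}$ you only have $\e^{2\a}W_\e(\e^\a t)\ge\s_1|t|^m\ge\s_1R^m$. So the exterior resolvent norms are $O(R^{-m})$, not $O(\e^{2(1+\a)})$ — still harmless given your order of limits $\lim_{R\to\infty}\lim_{\e\to 0}$, but the claimed $\e$-decay is false. Second, the assertion that truncating $\BH$ at $\pm R$ costs only $O(e^{-cR})$ in \emph{resolvent norm} ``because the low eigenfunctions decay superexponentially'' is not a proof: norm closeness of $\BH^{-1}$ to its bracketed (decoupled) versions involves the whole spectrum, not finitely many eigenfunctions. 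It is true, but requires an argument such as the Birman lemma the paper deploys in section \ref{42} to compare $\BQ_{\e,N}^{-1}$ with $\BQ_{\e,D}^{-1}$ (a maximization of $\CR(u)$ over solutions of the homogeneous equation, exploiting that the potential is large near the cut points), or an Agmon/Combes--Thomas decoupling estimate. With those two steps filled in, your scheme closes; the paper's Propositions \ref{3:thm} and \ref{3:thm1} are precisely the device that lets one avoid them.
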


As in section \ref{dn}, in the proofs of these theorems we rely
upon proposition \ref{10:lem}, taking $\GH_\e=L^2(\Om_\e)$ and
$\CH_\e=\CL_\e$. The basic bilinear form is again given by
\eqref{4:x}. We have $\CA_\e=\BQ_\e$. The latter operator acts on
$L^2(\R)$; it is generated by the quadratic form \eqref{1:4a}
defined on its natural domain. The most important thing is to prove
\eqref{1:12}.
Indeed, the conditions ({\it i}) --
({\it iii}) may lead to a function $W_\e(x)$ that is bounded; then theorem
2.16 from the book \cite{S}, which was the main ingredient of our
proof of theorem 1.3 in \cite{FS}, does not apply. We need an
appropriate substitute.  First of all, we prove the following
lemma which can be considered as a partial generalization of
theorem 2.16 in \cite{S}. By $\CC$ we denote the
ideal of all compact operators in the algebra of all bounded
operators.
\begin{prop}\label{3:thm}
Let $\BT\ge 0$ and $\BT_\e\ge 0,\ 0<\e\le\e_0$, be bounded self-adjoint
operators in a
separable Hilbert space $\GH$, and let $\BT_\e\to\BT$  strongly as $\e\to 0$.
Suppose also that there exists a bounded self-adjoint
operator $\BT_0$ such that $\BT_\e\le\BT_0$ for all $\e\le\e_0$, and
\[ \dist(\BT_0,\CC)=m,\qquad m\ge 0.\]
Then
\begin{equation*}
\limsup\limits_{\e\to 0}\|\BT_\e-\BT\|\le m.
\end{equation*}
\end{prop}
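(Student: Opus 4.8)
The plan is to reduce the claim to a statement about spectral projections and exploit that $\BT_0$ is a "small perturbation of a compact operator". First I would fix $\tau > m$ and, using $\dist(\BT_0,\CC) = m$, write $\BT_0 = \BK + \BR$ with $\BK$ compact and $\|\BR\| < \tau$. Replacing $\BK$ by the part of its spectral decomposition above some small threshold, I may assume $\BK$ has finite rank, at the cost of enlarging $\|\BR\|$ slightly (still $< \tau$); let $\CN$ be the (finite-dimensional) range of $\BK$ and $\BP$ the orthogonal projection onto $\CN$, $\BP^\perp = I - \BP$. Then for every $\e \le \e_0$ and every $\psi \in \GH$,
\[
(\BT_\e \BP^\perp\psi, \BP^\perp\psi) \le (\BT_0 \BP^\perp\psi,\BP^\perp\psi) = (\BR\,\BP^\perp\psi,\BP^\perp\psi) \le \tau\|\BP^\perp\psi\|^2,
\]
since $\BK\BP^\perp = 0$. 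Because $0 \le \BT_\e$, this gives a bound on $\|\BP^\perp\BT_\e^{1/2}\|$ — more precisely $\|\BT_\e^{1/2}\BP^\perp\| \le \tau^{1/2}$ — uniformly in $\e$, and the same bound holds for the limit operator $\BT$ (strong limits preserve the inequality $\BT \le \BT_0$). So on the "large" subspace $\CN^\perp$ both $\BT_\e$ and $\BT$ are controlled by $\tau$.

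Next I would split $\BT_\e - \BT$ according to the decomposition $I = \BP + \BP^\perp$ into the four pieces $\BP(\BT_\e - \BT)\BP$, $\BP(\BT_\e-\BT)\BP^\perp$, $\BP^\perp(\BT_\e-\BT)\BP$, $\BP^\perp(\BT_\e-\BT)\BP^\perp$. The purely "compact-direction" term $\BP(\BT_\e-\BT)\BP$ acts on the finite-dimensional space $\CN$; since $\BT_\e \to \BT$ strongly and $\CN$ is finite-dimensional, strong convergence is norm convergence there, so $\|\BP(\BT_\e-\BT)\BP\| \to 0$. For the remaining three terms I would use the quadratic-form bound above: writing $\BT_\e - \BT = \BT_\e^{1/2}\BT_\e^{1/2} - \BT^{1/2}\BT^{1/2}$ and inserting $\BP^\perp$, each of these terms has norm bounded by a constant times $\tau^{1/2}$ times a factor that either is $\|\BP^\perp\BT_\e^{1/2}\| + \|\BP^\perp\BT^{1/2}\| \le 2\tau^{1/2}$ or tends to $0$. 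The cleanest route: for $\|\psi\| = \|\varphi\| = 1$ estimate
\[
|((\BT_\e-\BT)\psi,\varphi)| \le |(\BT_\e\BP\psi,\BP\varphi) - (\BT\BP\psi,\BP\varphi)| + (\text{terms each containing a }\BP^\perp),
\]
and bound every term containing a $\BP^\perp$ using $(\BT_\e u,u) \le \tau\|u\|^2$ and $(\BT u,u) \le \tau\|u\|^2$ for $u \in \CN^\perp$ via Cauchy--Schwarz, getting a bound $\le \varf(\e) + C\tau$ where $\varf(\e) \to 0$ and $C$ is an absolute constant. Hence $\limsup_{\e\to 0}\|\BT_\e - \BT\| \le C\tau$.

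The place where one must be careful — and the main obstacle — is getting the constant $C$ down to $1$, i.e. proving $\limsup \le m$ rather than $\limsup \le Cm$ with $C > 1$. A crude four-piece split gives an unwanted constant. To fix this I would argue more symmetrically: on $\CN^\perp$ we have $0 \le \BT_\e \le \tau\BP^\perp$ and $0 \le \BT \le \tau\BP^\perp$ as forms, so $\|\BP^\perp\BT_\e\BP^\perp - \BP^\perp\BT\BP^\perp\| \le \tau$ directly (difference of two positive operators each $\le \tau$), and the cross terms $\BP(\BT_\e-\BT)\BP^\perp$ need a separate but still $\tau^{1/2}$-type bound — one shows these cross terms are actually $o(1)$ as $\e \to 0$. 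Indeed $\BP(\BT_\e - \BT)\BP^\perp = \BP(\BT_\e - \BT)$ restricted, and since $\BP$ is finite rank and $\BT_\e \to \BT$ strongly, $\|\BP(\BT_\e-\BT)\| \to 0$ (finite rank times strongly convergent); similarly for the adjoint term. Thus only $\BP^\perp(\BT_\e - \BT)\BP^\perp$ survives in the limit, contributing at most $\tau$, and $\limsup_{\e\to 0}\|\BT_\e - \BT\| \le \tau$. Letting $\tau \downarrow m$ finishes the proof.
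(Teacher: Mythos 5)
Your proof is correct and follows essentially the same route as the paper's: a finite-dimensional reduction of the compact approximant to $\BT_0$, the sandwich $0\le \BT_\e,\BT\le \BT_0$ to control everything on the orthogonal complement, and strong convergence (which becomes norm convergence against a fixed finite-rank projection) to kill the remaining pieces --- the paper merely organizes this as a quadratic-form estimate with $h=f+g$ instead of your four-block decomposition. One small point to make explicit: $\|\BP(\BT_\e-\BT)\|\to 0$ is not literally ``finite rank times strongly convergent'' (left multiplication by a finite-rank operator does not tame strong convergence in general); it is the adjoint of $(\BT_\e-\BT)\BP$, so you need the self-adjointness of $\BT_\e-\BT$, which of course holds here.
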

\begin{proof}
Fix $\y>0$ and find an operator $\BS=\BS^*\in\CC$ that satisfies
$\|\BT_0-\BS\|<m+\y$.
Let $\GF$ be a finite-dimensional subspace in $\GH$, such that
$\|\BS g\|\le\y\|g\|$ for any $g\perp\GF$. Then
\[\|\BT_0 g\|\le\|(\BT_0-\BS)g\|+\|\BS g\|\le(m+ 2\y)\|g\|,\qquad
\forall g\perp\GF.\]

Now, let $h\in\GH$ be an arbitrary element, and let $h=f+g$ where
$f\in\GF$ and $g\perp\GF$. Then
\[ \left((\BT_\e-\BT)h,h\right)=\left((\BT_\e-\BT)f,f\right)+
\left((\BT_\e-\BT)g,g\right)+2\re\left((\BT_\e-\BT)f,g\right).\]
Since the dimension of $\GF$ is finite,
strong convergence $\BT_\e\to\BT$ implies
\[\lim_{\e\to 0}\sup_{f\in\GF}\|(\BT_\e-\BT)f\|/\|f\|=0.\]
Therefore,
\[ |\left((\BT_\e-\BT)h,h\right)|\le|\left((\BT_\e-\BT)g,g\right)| +\y\|f\|^2+
2\y\|f\|\|g\|\]
if $\e$ is small enough. Further,
\[-(\BT_0 g,g)\le \left((\BT_\e-\BT)g,g\right)\le(\BT_0 g,g),\]
since both $\BT$ and $\BT_\e$ are non-negative operators. Therefore,
\[|\left((\BT_\e-\BT)g,g\right)|\le (\BT_0 g,g)\le (m+2\y)\|g\|^2.\]
Finally, this yields
\[|\left((\BT_\e-\BT)h,h\right)|\le (m+2\y)\|g\|^2+2\y\|f\|^2+\y\|g\|^2
\le (m+3\y)\|h\|^2.\]
Since $\y>0$ is arbitrary, the statement of the Proposition follows.
\end{proof}

\vskip0.2cm
Below, the symbol $\BZ_V$ denotes the Schr\"odinger operator on $L^2(\R)$
with the non-negative potential $V(x)$. We define the operator $\BZ_V$
via its quadratic form.
The next statement is a substitute for proposition 4.1 in \cite{FS}.

\begin{prop}\label{3:thm1}
Let $V(x)$ and $V_\e(x)$, $0<\e\le\e_0$, be non-negative measurable
functions
on $\R$, such that $V(x)\to\infty$ as $|x|\to\infty$ and
\begin{equation}\label{3:2}
V_\e(x)\to V(x)\ \text{as}\ \e\to 0,\qquad {\text{uniformly on compact sets.}}
\end{equation}
Suppose also that
\[ V_\e(x)\ge V^\circ_\e(x),\qquad \forall x\in\R,\ 0<\e\le\e_0,\]
where $V_\e^\circ(x)$ is another family of
measurable functions on $\R$, which is
monotone in $\e$:
\begin{equation}\label{3:3}
\e_1>\e_2\ \Longrightarrow\ V^\circ_{\e_1}(x)\le V^\circ_{\e_2}(x),
\qquad \forall x\in\R
\end{equation}
and
\begin{equation}\label{3:4}
c(\e):=\liminf\limits_{|x|\to\infty}V^\circ_\e(x)\to\infty
\qquad\text{as}\   \e\to 0.
\end{equation}
Then
\begin{equation}\label{1:9xx}
\|\BZ_{V_\e}^{-1}-\BZ_V^{-1}\|\to 0,\qquad  \e\to 0.
\end{equation}
\end{prop}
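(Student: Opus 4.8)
The plan is to derive \eqref{1:9xx} from Proposition \ref{3:thm} by passing to the inverse operators and carefully controlling the compact-operator defect of a suitable dominating operator. First I would set $\BT_\e=\BZ_{V_\e}^{-1}$ and $\BT=\BZ_V^{-1}$; these are bounded non-negative self-adjoint operators since $V_\e,V\ge0$ and the form domains are nontrivial. The first thing to check is strong convergence $\BT_\e\to\BT$. This follows from the uniform-on-compacts convergence \eqref{3:2}: if $f$ is compactly supported and smooth, then $\BZ_{V_\e}(\BZ_V^{-1}f)\to f$ in an appropriate sense, and a standard resolvent-difference argument (write $\BZ_{V_\e}^{-1}-\BZ_V^{-1}$ sandwiched between the two operators, test against form-bounded vectors, and use that $V_\e\to V$ on the compact set carrying the relevant mass while the tails are controlled by the common lower bound $V^\circ_\e$) gives $\|(\BZ_{V_\e}^{-1}-\BZ_V^{-1})f\|\to0$ on a dense set; uniform boundedness of the family $\{\BT_\e\}$ then extends this to all of $\GH$.

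Next I would produce the dominating operator $\BT_0$ with $\dist(\BT_0,\CC)=0$, i.e. a compact operator, so that Proposition \ref{3:thm} forces $\|\BT_\e-\BT\|\to0$. The natural candidate is $\BT_0=\BZ_{V^\circ_{\e_0}}^{-1}$, the inverse Schr\"odinger operator with the smallest potential in the monotone family $\{V^\circ_\e\}$. By monotonicity \eqref{3:3}, for every $\e\le\e_0$ we have $V^\circ_\e\ge V^\circ_{\e_0}$, and since $V_\e\ge V^\circ_\e\ge V^\circ_{\e_0}$ the form inequality yields $\BZ_{V_\e}\ge\BZ_{V^\circ_{\e_0}}$, hence $\BT_\e=\BZ_{V_\e}^{-1}\le\BZ_{V^\circ_{\e_0}}^{-1}=\BT_0$ as required. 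It remains to argue that $\BZ_{V^\circ_{\e_0}}^{-1}$ is compact. Here I would use \eqref{3:4} at the fixed value $\e=\e_0$: we only know $\liminf_{|x|\to\infty}V^\circ_{\e_0}(x)=c(\e_0)$ is finite, not infinite, so $\BZ_{V^\circ_{\e_0}}^{-1}$ need not itself be compact. The fix is to apply Proposition \ref{3:thm} with $m=\dist(\BT_0,\CC)$ strictly positive but controlled: one shows that $\dist(\BZ_{V^\circ_{\e_0}}^{-1},\CC)\le c(\e_0)^{-1}$, because the essential spectrum of $\BZ_{V^\circ_{\e_0}}$ lies in $[\,\liminf_{|x|\to\infty}V^\circ_{\e_0}(x),\infty)$ (a Persson-type bound), whence the essential spectrum of its inverse lies in $[0,c(\e_0)^{-1}]$ and the distance to the compacts is at most $c(\e_0)^{-1}$. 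Proposition \ref{3:thm} then gives $\limsup_{\e\to0}\|\BT_\e-\BT\|\le c(\e_0)^{-1}$.

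Finally I would remove the residual $c(\e_0)^{-1}$ by letting $\e_0$ shrink. The hypotheses are really assumed for all small $\e$, and the monotone family $\{V^\circ_\e\}$ together with \eqref{3:4} means $c(\e_0)\to\infty$ as $\e_0\to0$. So for any $\y>0$ pick $\e_1$ so small that $c(\e_1)^{-1}<\y$; applying the previous paragraph with $\e_1$ in place of $\e_0$ (which is legitimate since all hypotheses hold on $(0,\e_1]$) gives $\limsup_{\e\to0}\|\BT_\e-\BT\|\le\y$. As $\y>0$ is arbitrary, $\|\BZ_{V_\e}^{-1}-\BZ_V^{-1}\|\to0$, which is \eqref{1:9xx}. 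I expect the main obstacle to be the second step: verifying the strong convergence $\BZ_{V_\e}^{-1}\to\BZ_V^{-1}$ rigorously when the $V_\e$ are merely measurable and converge only on compacts, and — more delicately — justifying the Persson-type estimate $\dist(\BZ_{V^\circ_{\e_0}}^{-1},\CC)\le c(\e_0)^{-1}$ from the $\liminf$ behavior of the potential alone, rather than from $V^\circ_{\e_0}\to\infty$; both require a localization argument splitting space into a large ball (where form convergence or compactness of the resolvent applies) and its complement (where the potential lower bound controls everything).
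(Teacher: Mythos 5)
Your proposal is correct and follows essentially the same route as the paper: strong convergence of the inverses from \eqref{3:2}, domination $\BT_\e\le\BZ_{V^\circ_{\e^*}}^{-1}$ for $\e<\e^*$ via the monotonicity \eqref{3:3}, an application of Proposition \ref{3:thm} with $m\le c(\e^*)^{-1}$, and finally $\e^*\to 0$. Your explicit Persson-type justification of $\dist(\BZ_{V^\circ_{\e^*}}^{-1},\CC)\le c(\e^*)^{-1}$ is left implicit in the paper, which also obtains the strong convergence by citing Kato's Theorem 8.1.5 rather than by a hands-on resolvent argument.
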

\begin{proof}
Denote
\[\BT=\BZ_V^{-1},\qquad \BT_\e=\BZ_{V_\e}^{-1}.\]
Let also $\BT_\e^\circ$ be the inverse to the
Schr\"odinger operator with the potential $V_\e^\circ(x)$.
The assumption \eqref{3:2} implies that $\BZ_{V_\e}u\to \BZ_{V}u$ for any
$u\in C_0^\infty(\R)$, and theorem 8.1.5 in \cite{K} guarantees
the strong convergence $\BT_\e\to\BT$.

Fix $\e^*\in(0,\e_0)$; then for $\e<\e^*$ the conditions of proposition
\ref{3:thm}
are satisfied with $\BT_0=\BT_{\e^*}^\circ$. From this proposition we
conclude that
\[\limsup\limits_{\e\to 0}\|\BT_\e-\BT\|\le 1/c(\e^*).\]
Taking $\e^*\to 0$, we arrive at \eqref{1:9xx}.
\end{proof}
\subsection{Proof of theorem \ref{1:t1inf}.}\label{112}
Introduce the potential
\begin{equation*}
V_\e(t)=\e^{2\a}W_\e(t\e^\a)
\end{equation*}
where $W_\e(x)$ is the function defined in \eqref{1:4g}. The assumption
\eqref{1:1} implies that
\[ V_\e(t)=q(t)+\pi^2\rho_1(t\e^\a)t^{m+1}\e^\a+\e^{2\a}v(t\e^\a)\]
where $\rho_1(x)$ is some function which is bounded on any finite interval
$(-a,a)$, cf. proof of theorem 1.3 in \cite{FS}. Hence, $V_\e(t)\to q(t)$
uniformly on compact subsets in $\R$.

It follows from the
assumptions ({\it i}) and ({\it iii}) that for $|x|\ge 1$ the function
 $\pi^2(h^{-2}(x)-M^{-2})$ is bounded below:
\begin{equation*}
\pi^2(h^{-2}(x)-M^{-2})\ge c_0,\qquad |x|\ge 1,
\end{equation*}
with some $c_0>0$. On $[-1,1]$ the inequality
$W_\e(x)\ge\s\e^{-2}|x|^m$ with some $\s>0$ is fulfilled,
which leads to the estimate
\begin{equation*}
W_\e(x)\ge \s_1\e^{-2}\min(|x|^m,1),\qquad \forall x\in\R,\ \e>0.
\end{equation*}
Hence,
\[  V_\e(t)\ge \s_1\min(\e^{-2+2\a+m\a}|t|^m,\e^{-2+2\a})=
\s_1\min(|t|^m,\e^{-2+2\a}).\]
The conditions \eqref{3:3} and \eqref{3:4} are satisfied if we take
\[V_\e^\circ(t)=\s_1\min(|t|^m,\e^{-2+2\a}).\]
So, theorem \ref{3:thm1} applies and yields
\[ \|\wh\BQ_\e^{-1}-\BH^{-1}\|\to 0,\]
where $\wh\BQ_\e$ is the Schr\"odinger operator with the potential $V_\e(t)$.
The substitution $t=x\e^{-\a}$ leads to \eqref{1:12}.

\subsection{Theorem \ref{1:t2inf}: outline of proof.}\label{anal}
The argument necessary for proving theorem \ref{1:t2inf} is quite similar
to the one in \cite{FS}, and also in section \ref{dn} of the present paper;
we only outline
it, concentrating on the few new moments.

\vskip0.2cm
First of all, note that
\[ \|(\e^{2\a}\BH_\e)^{-1}\|=\|\BH^{-1}\|=const,\]
and hence \eqref{1:12} yields
\[ \|\BQ_\e^{-1}\|\le C\e^{2\a}.\]
Equivalently, this means that \eqref{10:5} is satisfied with
$c(\e)=C^{-1}\e^{-2\a}$.
The condition \eqref{10:6} is satisfied with the same $p(\e)$ as in
\eqref{2:1a}; its proof does not change. Since $\a\le 2/3$, \eqref{10:6x}
also holds. It only remains to prove \eqref{10:7}. To this end, we repeat the
reasoning in section \ref{end2}. The only difference is that now for
estimating the integral in the right-hand side of \eqref{2:1z} we need
the condition ({\it iii$\,^\prime$}).

\section{The case $I=\R$: behavior of eigenvalues, a simple proof}\label
{infeig}
An analogue of theorem \ref{1:t0} follows from theorems \ref{1:t2inf}
and \ref{1:t1inf} in the same way as theorem \ref{1:t0} itself follows
from theorems \ref{1:t2} and \ref{1:t1}. However, there is a much simpler
and independent way to prove an analogue of statement 1) in
theorem \ref{1:t0}. It uses the Dirichlet -- Neumann bracketing, which is
possible, since theorem \ref{1:t0} is now in our disposal for both the $D$
and the $DN$ cases. For the proof, the condition
({\it iii$\,^\prime$}) is not needed.

\vskip0.2cm

In theorem \ref{1:t4} below $\nu(\e)$
stands for the bottom of $\s_{ess}(\D_\e)$, and we take
$\nu(\e)=\infty$ if $\D_\e$ has no essential spectrum. We denote
by $n(\e),\ n(\e)\le\infty$, the number of eigenvalues $\l_j(\e)<\nu(\e)$.
\begin{thm}\label{1:t4} Let $I=\R$.
If $h(x)$ satisfies the conditions ({\it i}), ({\it ii}) and ({\it iii}),
then $\nu(\e)\to\infty$ as $\e\to 0$. Moreover,
for small values of $\e$  the spectrum of $\D_\e$ below $\nu(\e)$ is non-empty,
$n(\e)\to\infty$ as $\e\to 0$, and for
each $j\in\N$ the equality \eqref{1:s1} holds;
$\mu_j$ are eigenvalues of the operator \eqref{1:s2}.
\end{thm}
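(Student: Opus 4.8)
The plan is to bracket the operator $\D_\e$ (on $I=\R$) between two operators of the type already understood via Theorem~\ref{1:t0}, using Dirichlet--Neumann bracketing at the points $x=\pm a$ for a fixed but arbitrary $a>0$. Fix $a>0$ and split $\R=(-\infty,-a]\cup[-a,a]\cup[a,\infty)$. Imposing an extra Neumann condition at $x=\pm a$ can only lower eigenvalues, while an extra Dirichlet condition can only raise them; in the usual variational ordering this gives
\begin{equation*}
\D_{\e}^{(N)}\le \D_\e\le \D_{\e}^{(D)},
\end{equation*}
where $\D_{\e}^{(N)}$ (resp. $\D_{\e}^{(D)}$) is the orthogonal sum of the Laplacian on the middle strip over $[-a,a]$ with Neumann (resp. Dirichlet) condition at $x=\pm a$, and the Laplacians on the two outer strips. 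On the middle piece over $[-a,a]$ the function $h$ still satisfies conditions ({\it i}), ({\it ii}) (with the same $M$, $c_\pm$, $m$), so Theorem~\ref{1:t0}, part 1), applies: the $j$-th eigenvalue of either boundary-condition variant on the middle strip satisfies \eqref{1:s1} with the same $\mu_j$.

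The key point is that the outer strips contribute nothing below a level that diverges. By condition ({\it iii}), $\sup_{|x|\ge a} h(x)\le M_a<M$ for $a$ large enough; hence on each outer strip the bottom of the spectrum of the Laplacian (with Dirichlet condition on the horizontal boundary $y=0,\e h(x)$, and either boundary condition at $x=\pm a$) is at least $\pi^2/(M_a^2\e^2)$, by the elementary one-dimensional estimate \eqref{0:2p} integrated in $x$. Since $M_a<M$, this lower bound exceeds $\pi^2/(M^2\e^2)+\mu_j\e^{-2\a}$ for every fixed $j$ once $\e$ is small, because $\e^{-2\a}=o(\e^{-2})$. Therefore, for small $\e$, the $j$-th eigenvalue of $\D_\e^{(N)}$ and of $\D_\e^{(D)}$ below $\pi^2/(M_a^2\e^2)$ coincides with the $j$-th eigenvalue of the corresponding middle-strip operator, and both satisfy \eqref{1:s1}. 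Also $\nu(\e)\ge \pi^2/(M_a^2\e^2)\to\infty$, since the essential spectrum of $\D_\e$ is supported on the outer strips (the middle strip has discrete spectrum), and likewise $\nu(\e)\ge$ the bottom of $\s_{ess}$ of $\D_\e^{(D)}$; the presence of at least $n$ eigenvalues below $\nu(\e)$ for any fixed $n$ follows from the same comparison, so $n(\e)\to\infty$.

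Finally I would assemble the squeeze: by the min--max principle, $\l_j(\e,\D_\e^{(N)})\le \l_j(\e)\le \l_j(\e,\D_\e^{(D)})$, and both outer bounds, after subtracting $\pi^2/(M^2\e^2)$ and multiplying by $\e^{2\a}$, tend to $\mu_j$ by Theorem~\ref{1:t0}. Hence \eqref{1:s1} holds for $\l_j(\e)$. The main obstacle, and the only place requiring care, is the bookkeeping at the threshold: one must check that for each fixed $j$ the $j$-th eigenvalue (counted below $\nu(\e)$) is genuinely produced by the middle strip and not shadowed by low-lying spectrum of the outer pieces — this is exactly where the strict inequality $M_a<M$ from condition ({\it iii}), together with $\a<1$ so that $\e^{-2\a}=o(\e^{-2})$, does the work. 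Note that condition ({\it iii$\,^\prime$}) plays no role here, which is why it can be dropped for this statement.
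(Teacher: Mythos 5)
Your proposal is correct and follows essentially the same route as the paper: Dirichlet--Neumann bracketing at $x=\pm a$, the bound $\sup_{|x|\ge a}h(x)=M_a<M$ from (\emph{i}) and (\emph{iii}) pushing the outer strips' spectra above $\pi^2/(M_a^2\e^2)$, and Theorem~\ref{1:t0} applied to the middle strip to squeeze $\l_j(\e)$. The only cosmetic difference is that the paper takes an arbitrary $a>0$ (conditions (\emph{i}) and (\emph{iii}) together already give $M_a<M$ for every $a$), whereas you hedge with ``$a$ large enough''; this does not affect the argument.
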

\begin{proof}
Take any segment $\wh I=[-a,a]$. It follows from the conditions ({\it i})
and ({\it iii}) that
\[ M_a:=\sup\limits_{x\notin \wh I}h(x)<M.\]
Denote
\[ \Om_{a,\e}=\{(x,y)\in\Om_\e: x\in \wh I\},\qquad
\Om'_{a,\e}=\{(x,y)\in\Om_\e: x\notin \wh I\}.\]
Let $\D_{a,\e,D},\ \D'_{a,\e,D}$ be the Dirichlet Laplacians in
$\Om_{a,\e},\ \Om'_{a,\e}$ respectively; notations $\D_{a,\e,DN},\
\D'_{a,\e,DN}$ have the similar meaning. Then
\begin{equation*}
\D_{a,\e,DN}+\D'_{a,\e,DN} <\D'_\e<\D_{a,\e,D}+\D'_{a,\e,D}.
\end{equation*}

For $\Om'_{a,\e}$ an inequality similar to \eqref{0:2a}, with $M_a$ in place
of $M$, is satisfied; it implies that the spectra of both operators
$\D'_{a,\e,D}$ and $\D'_{a,\e,DN}$ lie above
the number $C(\e):=\frac{\pi^2}{M_a^2\e^2}$, which is bigger than
$\frac{\pi^2}{M^2\e^2}$.
It follows that $\nu(\e)\ge C(\e)$,
and the eigenvalues $\l_j(\e)=\l_j(\D_\e)$ lying below $C(\e)$
satisfy the two-sided inequality
\[\l_j(\D_{a,\e,DN})\le \l_j(\e)\le \l_j(\D_{a,\e,D}).\]
The asymptotics \eqref{1:s1} for the eigenvalues
$\l_j(\D_{a,\e,DN}), \ \l_j(\D_{a,\e,D})$ implies that the number
$\nu(\e)$ grows indefinitely  as $\e\to 0$, and that the eigenvalues
$\l_j(\e)$ have the same asymptotics given by \eqref{1:s1}.
 \end{proof}

Another problem
of the same nature was analyzed in section 6.1 of \cite{FS}, it concerns the
case when the segment $I$ is finite but the function $h(x)$ is allowed to
vanish at the ends of $I$.

\end{document}